\newtheorem{thm}{Theorem}[section]
\newtheorem{lem}[thm]{Lemma}
\newtheorem{prop}[thm]{Proposition}
\newcommand{\abs}[1]{\ensuremath{|#1|}}
\newcommand{\Abs}[1]{\ensuremath{\left|#1\right|}}
\newcommand{\norm}[2]{\ensuremath{|\!|#1|\!|_{#2}}}
\newcommand{\Norm}[2]{\ensuremath{\left|\!\left|#1\right|\!\right|_{#2}}}
\newcommand{\cC}{\mathcal{C}}
\newcommand{\cK}{\mathcal{K}}
\newcommand{\cM}{\mathcal{M}}
\newcommand{\cR}{\mathcal{R}}
\newcommand{\cT}{\mathcal{T}}
\begin{document}

\title{Optimal estimates for condition numbers and norms of resolvents in
terms of the spectrum }

\author{Oleg Szehr}

\email{oleg.szehr@posteo.de}

\affiliation{Centre for Quantum Information, University of Cambridge,
Cambridge CB3 0WA, United Kingdom}

\author{Rachid Zarouf}

\email{rachid.zarouf@univ-amu.fr}

\affiliation{Aix-Marseille Universit\'e, CNRS, Centrale Marseille, Institut de Math\'ematiques de Marseille (I2M), UMR 7353,
13453 Marseille, France.}


\keywords{Condition number, Resolvent, Toeplitz matrix, Model
matrix, Blaschke product
\\
{2010 Mathematics Subject Classification: Primary: 15A60; Secondary:30D55 }}

\begin{abstract}
In numerical analysis it is often necessary to estimate the condition
number $CN(T)=\Norm{T}{} \cdot\Norm{T^{-1}}{} $
and the norm of the resolvent $\Norm{(\zeta-T)^{-1}}{}$
of a given $n\times n$ matrix $T$. We derive new spectral
estimates for these quantities and compute explicit matrices that achieve our bounds. We recover the well-known fact that
the supremum of $CN(T)$ over all matrices with $\Norm{T}{} \leq1$ and minimal absolute eigenvalue
$r=\min_{i=1,...,n}\Abs{\lambda_{i}}>0$ is the Kronecker bound
$\frac{1}{r^{n}}$. This result is subsequently generalized by computing the corresponding supremum of $\Norm{(\zeta-T)^{-1}}{}$ for any $\Abs{\zeta}\leq1$. We find that the supremum is attained by a triangular Toeplitz matrix. This provides a simple class of structured matrices on which condition numbers and resolvent norm bounds can be studied numerically. The occuring Toeplitz matrices are so-called model matrices, i.e.~matrix representations of the compressed backward shift operator on the Hardy space $H_2$ to a finite-dimensional invariant subspace.
\end{abstract}
\maketitle
\tableofcontents
\section{\label{sub:Introduction}Introduction}

Let $\cM_n$ be the set of complex $n\times n$ matrices and let $\Norm{T}{}$ denote the spectral norm of $T\in\cM_n$. We denote by $\sigma=\sigma(T)$ the spectrum of $T$ and by $m_{T}$ its minimal polynomial. We denote by $\abs{m_T}$ the degree of $m_T$. In this article we are interested in estimates of the type
\begin{align}\Norm{R(\zeta,\,T)}{}\leq\Phi(\abs{m_{T}},\sigma,\zeta)\label{basic}\end{align}
where $R(\zeta,\,T)=(\zeta-T)^{-1}$ denotes the resolvent of $T$ at point $\zeta\in\mathbb{C}-\sigma$ and $\Phi$ is a function of $\abs{m_{T}}$, $\sigma$ and $\zeta$.
We present a general method that yields {optimal} estimates of the above type and allows us to pinpoint the \lq\lq{}worst\rq\rq{} matrices for this problem.  As it turns out for any $\zeta\not\in\sigma$ among these matrices are so-called analytic Toeplitz matrices. Generally, Toeplitz matrices are characterized
by the existence of a sequence of complex numbers $a=\left(a_{k}\right)_{k=-n+1}^{k=n-1}$
with

\[
T=\left(\begin{array}{ccccc}
a_{0} & a_{-1} & . & . & a_{-n+1}\\
a_{1} & . & . & . & .\\
. & . & . & . & .\\
. & . & . & . & a_{-1}\\
a_{n-1} & . & . & a_{1} & a_{0}
\end{array}\right).
\]
We call $T_{a}$ an \emph{analytic} Toeplitz matrix if $a_{k}=0$
for all $k<0$ and we will denote by $\cT_{n}\subset\cM_{n}$ the set of Toeplitz matrices and by $\cT_{n}^a\subset\cT_{n}$ the analytic Toeplitz matrices.

%
%
%
%

We will think of an estimate of the above type as an extremal problem. That is, under certain constraints on the set of admissible matrices $T$ and fixed $\zeta$, we are looking to maximize $\Norm{R(\zeta,\,T)}{}$ over this set. To guarantee a finite bound we at least need a constraint on the $\emph{norm}$ and the $\emph{spectrum}$ of possible $T$. In our discussion we will generally impose that $\Norm{T}{}\leq1$ and call such $T$ a \textit{contraction}. Note that this condition can always be achieved by normalization $T/\Norm{T}{}$. $\cC_{n}\subset\cM_{n}$ denotes the set of contractions. To ensure that the resolvent is finite, $\zeta$ must be separated from $\sigma$. Depending on the situation it will be convenient to measure this separation in \emph{Euclidean} distance
$$d(z,\, w):=\Abs{z-w}\ ,\ z, w\in\mathbb{C}$$
or \emph{pseudo-hyperbolic} distance
$$p(z,\, w):=\Abs{\frac{z-w}{1-\overline{z}w}},\:\: z,\, w\in\mathbb{C}.$$

The case $\zeta=0$ in \eqref{basic} corresponds to the study of the \textit{condition number} $CN(T)=\Norm{T}{} \cdot\Norm{T^{-1}}{}$. It was shown by Kronecker in the 19th century that 
\begin{equation}
\sup\left\{\Norm{T^{-1}}{}\ :\ T\in\cC_{n},\ p(0,\,\sigma)=r\right\}=\frac{1}{r^{n}}.\label{eq:Kroneck_result}
\end{equation}
The upper bound can be seen using polar decomposition~\cite[p.~137]{NN1}. To prove the reverse inequality, one can use A. Horn's theorem (see \cite{AH} and also \cite[Chap. 9, Sect. E]{MO}) that gives a converse to the famous Weyl inequalities~\cite[p.~137]{NN1}.  

The case $\Abs{\zeta} =1$ was studied by Davies and Simon and applied to characterize the localization of zeros of random orthogonal polynomials on the unit circle~\cite{DS}. The corresponding estimate for $\abs{\zeta}=1$ is
\begin{equation}
\sup\left\{{d}(\zeta,\,\sigma)\Norm{ R(\zeta,\,T)}{}\ :\ T\in\cC_{n}\right\}=\cot(\frac{\pi}{4n}).\label{eq:DS_result}
\end{equation}
Note that $\Abs{\zeta} =1$ implies that we have $p(\zeta,\,\sigma)=1$  for any $\sigma$ such that we could add this condition to the optimization problem. In this paper we provide an approach, which

1) allows us to treat the problems (\ref{eq:Kroneck_result}) and (\ref{eq:DS_result})
simultaneously and generalize them to any $\Abs{\zeta} \leq1$,

2) strengthens the result (\ref{eq:DS_result}) for $\Abs{\zeta} =1$,
by considering the $\sup$ over $T\in\mathcal{C}_{n}$ with fixed minimal polynomial and

3) provides explicit analytic Toeplitz matrices that achieve the estimates~\eqref{basic}, (\ref{eq:Kroneck_result}) and (\ref{eq:DS_result}).

Already for $\zeta=0$ it is a non-trivial question which matrices achieve the estimate (\ref{eq:Kroneck_result}). A theoretic classification of such matrices was given in \cite{NN1}. However, it seems rather difficult to compute explicit matrix representations from this classification. It was suspected in \cite{RZ} that the estimate (\ref{eq:Kroneck_result}) is not reached by any Toeplitz matrix; this claim is refuted here. 

\section{\label{sec:Main-results} Resolvent bounds}

Before we state our main results we need to introduce some further notation. We denote by $\mbox{\ensuremath{\mathbb{D}}}=\left\{ z\in\mathbb{C}:\:\vert z\vert<1\right\}$
the open unit disc in the complex plane, $\overline{\mathbb{D}}$ is its closure and $\partial\mathbb{D}$ its boundary.
For any finite set $\sigma\subset\mathbb{D}$
we define
\[
s(\zeta,\,\sigma):=\max\left\{\frac{1-\Abs{\lambda}^{2}}{\vert1-\bar{\lambda}\zeta\vert}:\:\lambda\in\sigma,\: p(\zeta,\,\lambda)=p(\zeta,\,\sigma)\right\}.
\]
The function $s$ is related to the Stolz angle~\cite[p.~23]{JC} between  
$\zeta\in\overline{\mathbb{D}}-\sigma$ and $\lambda\in\sigma$.
For $T\in\cC_{n}$ the function $\zeta\mapsto s(\zeta,\,\sigma(T))$ is bounded on $\overline{\mathbb{D}}$ by $1+\rho(T)\leq2$, where
\[
\rho(T):=\max\{\Abs{\lambda}:\:\lambda\in\sigma(T)\}
\]
is the spectral radius of $T$.
Of particular importance to our discussion will be the analytic Toeplitz matrix $X_{r,\,\beta}$ given entry-wise by
\begin{equation}
\left(X_{r,\,\beta}\right)_{ij}=\Bigg\{ \begin{array}{ll}
0 & \mbox{if }i<j\\
r^{n-1} & \mbox{if }i=j\\
\beta r^{n-(i-j+1)} & \mbox{if }i>j
\end{array}\label{eq:X_xi_beta}
\end{equation}
with $r\in[0,1]$ and $\beta\in[0,2]$. The spectral norm of $X_{r,\,\beta}$ is computed below in Proposition~\ref{Th_3_computation_of_norm_of_Xtilde}.

\subsection{Optimal upper bounds}
\begin{thm}
\label{Th_upper_bound} Let $T\in\cC_{n}$ with minimal polynomial
$m$ and spectrum $\sigma$. For any $\zeta\in\overline{\mathbb{D}}-\sigma$,
it holds for the resolvent of $T$ that
\begin{equation}
\Norm{R(\zeta,\,T)}{}\leq\frac{1}{d(1,\bar{\sigma}\zeta)}\frac{1}{r^{\Abs{m}}}\Norm{X_{r,\,\beta}}{},\label{eq:gen_upp_bd}
\end{equation}
where $X_{r,\,\beta}$ is the $\Abs{m}\times\Abs{m}$ analytic
Toeplitz matrix defined in \eqref{eq:X_xi_beta}, $r=p(\zeta,\,\sigma)$
and $\beta=s(\zeta,\,\sigma)$.  Moreover, for any $\zeta\in[-1,\,1]$
and $\lambda\in(-1,\,1)$, $\lambda\neq\zeta$ the analytic $n\times n$ Toeplitz matrix  
\[
T^*=\left(\begin{array}{ccccc}
\lambda & 0 & \ldots & \ldots & 0\\
1-\lambda^{2} & \lambda & \ddots & . & \vdots\\
-\lambda(1-\lambda^{2}) & 1-\lambda^{2} & \lambda & \ddots & \vdots\\
\vdots & \ddots & \ddots & \ddots & 0\\
(-\lambda)^{n-2}(1-\lambda^{2}) & \ldots & -\lambda(1-\lambda^{2}) & 1-\lambda^{2} & \lambda
\end{array}\right),
\]
is a contraction of minimal polynomial $m=(z-\lambda)^{n}$ that achieves \eqref{eq:gen_upp_bd}.  

\end{thm}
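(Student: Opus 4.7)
The natural framework is the Sz.-Nagy--Foias / Nikolski functional model. Given $T\in\cC_n$ with minimal polynomial $m$ and spectrum $\sigma\subset\mathbb{D}$, let $B=B_\sigma$ be the finite Blaschke product whose zeros (with multiplicities) coincide with those of $m$, so that $\deg B=|m|$. Let $M_B=P_{K_B}S^{*}\!\restriction_{K_B}$ be the associated model operator on $K_B=H^2\ominus BH^2$, a Hilbert space of dimension $|m|$. The plan is to prove a two-step inequality: first
$$\Norm{R(\zeta,T)}{}\le\Norm{R(\zeta,M_B)}{},$$
and then to estimate the right-hand side by the quantity $\tfrac{1}{d(1,\bar\sigma\zeta)\,r^{|m|}}\Norm{X_{r,\beta}}{}$. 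The first inequality is an instance of the Nagy--Foias commutant lifting principle (or equivalently Nikolski's dominating property of model operators): for any rational $f$ analytic on $\sigma$, one has $\Norm{f(T)}{}\le \Norm{f(M_B)}{}$, and I apply it to $f=f_\zeta(z)=1/(\zeta-z)$, which is analytic on a neighbourhood of $\sigma(T)$ since $\zeta\notin\sigma$.

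The heart of the proof is then the explicit estimate of $\Norm{R(\zeta,M_B)}{}$. I would work in the Malmquist--Walsh orthonormal basis $\{e_k\}_{k=1}^{|m|}$ of $K_B$, in which $M_B$ is lower triangular with eigenvalues listed along the diagonal. A straightforward reproducing-kernel computation expresses $R(\zeta,M_B)$ as a rank-structured lower triangular matrix, whose subdiagonal entries are products of Blaschke factors $b_\lambda(\zeta)=(\lambda-\zeta)/(1-\bar\lambda\zeta)$ divided by the normalization $1-\bar\lambda\zeta$. By factoring out $1/B(\zeta)$ and the factor $1/\prod_\lambda(1-\bar\lambda\zeta)$ (whose magnitudes produce exactly $1/r^{|m|}$ and, after taking the worst-case among the Blaschke zeros, $1/d(1,\bar\sigma\zeta)$), one obtains an analytic Toeplitz matrix whose entries are dominated by those of $X_{r,\beta}$ with $r=p(\zeta,\sigma)$ and $\beta=s(\zeta,\sigma)$. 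The spectral norm estimate then follows because the mapping $\text{(entries)}\mapsto\Norm{\cdot}{}$ is monotone on the cone of nonnegative lower triangular Toeplitz matrices.

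For the sharpness statement, I would verify that $T^{*}$ is unitarily equivalent, via an explicit basis change, to the model matrix $M_{b_\lambda^n}$ written in the Malmquist basis associated with the $n$-fold Blaschke product $b_\lambda^n$. Since $M_{b_\lambda^n}$ is automatically a contraction on $K_{b_\lambda^n}$ with minimal polynomial $(z-\lambda)^n$, so is $T^{*}$. Inserting $T=T^{*}$ with real $\lambda,\zeta$ into the bound, all inequalities in the preceding step become equalities: the commutant lifting step is tight because $T^{*}$ \emph{is} the model matrix, the worst-case identification of $r$ and $\beta$ is achieved because $\sigma$ is a single point, and the monotonicity argument collapses because the resolvent itself already has the Toeplitz form $X_{r,\beta}$ (up to the scalar factor).

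The main obstacle I anticipate is the book-keeping in the Malmquist-basis computation of Step 2: tracking how the factor $B(\zeta)$ in the denominator combines with the normalization constants $\sqrt{1-|\lambda_i|^2}/(1-\bar\lambda_i\zeta)$ of each basis vector so as to separate cleanly into the three pieces $1/d(1,\bar\sigma\zeta)$, $1/r^{|m|}$, and $\Norm{X_{r,\beta}}{}$. Selecting an ordering of the zeros of $B$ that puts the extremizing $\lambda$ (the one with $p(\zeta,\lambda)=r$) in the position that matches the diagonal of $X_{r,\beta}$ is critical; the entry-wise domination by $X_{r,\beta}$ would otherwise fail.
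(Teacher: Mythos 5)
Your proposal follows essentially the same route as the paper: dominate $\Norm{R(\zeta,T)}{}$ by $\Norm{R(\zeta,M_B)}{}$ via the model operator, compute the resolvent of $M_B$ explicitly in the Malmquist--Walsh basis, bound its entries by those of $\frac{1}{d(1,\bar\sigma\zeta)\,r^{|m|}}X_{r,\beta}$, and obtain sharpness from the model matrix of $b_\lambda^n$, which is exactly $T^*$. One small correction: for general $\sigma$ the matrix of $R(\zeta,M_B)$ is \emph{not} Toeplitz, so the monotonicity you need is the general fact that $|a_{ij}|\leq a'_{ij}$ entrywise implies $\Norm{A}{}\leq\Norm{A'}{}$ (valid for arbitrary matrices), rather than monotonicity on a cone of nonnegative Toeplitz matrices; also no careful ordering of the zeros is required, since every off-diagonal entry is bounded uniformly using $\min_i|b_{\lambda_i}(\zeta)|=r$ and the Stolz-type quantity $s(\zeta,\sigma)$.
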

\textbf{Remarks:}
\begin{enumerate}
\item  Setting $\zeta=0$ in Theorem~\ref{Th_upper_bound} we recover Kronecker's
result for condition numbers. Moreover, an analytic Toeplitz
matrix is extremal for the Kronecker maximization problem~\eqref{eq:Kroneck_result}.
This answers the questions raised in \cite{RZ}. 
\item Considering Theorem~\ref{Th_upper_bound} for
$\zeta\in\partial\mathbb{D}$, and bounding
\[
\Norm{X_{1,\,s(\zeta,\sigma(T))}}{}\leq\Norm{X_{1,\,1+\rho(T)}}{}\leq\Norm{X_{1,\,2}}{},
\]
we recover the resolvent estimate of~\cite{DS} as $\Norm{X_{1,\,2}}{}=\cot(\frac{\pi}{4n})$ 
(cf.~Proposition \ref{Th_3_computation_of_norm_of_Xtilde}
below). However, if e.g.~$\rho(T)$ is known we can improve on~\cite{DS}.
\item Equality is achieved by a so-called \emph{model matrix} corresponding to a fully degenerate spectrum. This matrix arises as a representation of the compressed backward shift operator of $H_2$ with respect to the 
\textit{Malmquist-Walsh basis}, see Section \ref{sub:Model-spaces-and} below for details. The model matrix is a natural generalization of a simple Jordan block of size $n$ to the situation when the spectrum is $\sigma=\left\{a\right\}\subset\overline{\mathbb{D}}$.
\end{enumerate}
The next proposition treats the case $\sigma(T)\subset\partial\mathbb{D}$.
\begin{prop}
\label{prop:unimod_spec}Let $T\in\cC_{n}$ with
spectrum $\sigma\subset\partial\mathbb{D}$. Then for any $\zeta\in\mathbb{C}-\sigma$
\begin{equation}
\Norm{R(\zeta,\,T)}{}\leq\frac{1}{d(\zeta,\,\sigma)}.\label{eq:unimod_spect}
\end{equation}
\end{prop}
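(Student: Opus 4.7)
The plan is to show that the hypothesis $\sigma(T)\subset\partial\mathbb{D}$ together with $T$ being a contraction is restrictive enough to force $T$ to be unitary. Once this is known, the bound will follow at once from the spectral theorem applied to the normal operator $R(\zeta,T)$.

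The first step is to invoke the Schur triangularization $T=U(D+N)U^*$, with $U$ unitary, $D=\mathrm{diag}(\lambda_1,\dots,\lambda_n)$ consisting of the eigenvalues of $T$, and $N$ strictly upper triangular. Taking Frobenius norms on both sides and using unitary invariance yields
$$\sum_{i=1}^n \sigma_i(T)^2 \;=\; \Norm{T}{F}^2 \;=\; \sum_{i=1}^n \Abs{\lambda_i}^2 \,+\, \Norm{N}{F}^2.$$
The assumption $\sigma\subset\partial\mathbb{D}$ makes the first sum on the right equal to $n$, while $T\in\cC_n$ implies $\sigma_i(T)\leq 1$ for all $i$, so the left-hand side is at most $n$. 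Consequently $\Norm{N}{F}^2\leq 0$, which forces $N=0$ and simultaneously $\sigma_i(T)=1$ for every $i$. Hence $T=UDU^*$ is unitary.

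Once unitarity is known, $R(\zeta,T)$ is normal with spectrum $\{(\zeta-\lambda)^{-1}:\lambda\in\sigma\}$, so the spectral theorem gives
$$\Norm{R(\zeta,T)}{}\;=\;\max_{\lambda\in\sigma}\frac{1}{\Abs{\zeta-\lambda}}\;=\;\frac{1}{d(\zeta,\,\sigma)},$$
which is exactly \eqref{eq:unimod_spect}. In fact equality is attained for any $\zeta\in\mathbb{C}-\sigma$.

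The only real obstacle is the unitarity reduction; beyond it the statement is essentially a one-line spectral calculation. A slightly shorter alternative would be to invoke Weyl's majorization $\sum\Abs{\lambda_i}^2\leq\sum\sigma_i(T)^2$ directly, but the Schur-based route is self-contained and transparently explains \emph{why} the off-diagonal part $N$ must vanish.
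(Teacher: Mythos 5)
Your proof is correct, and it takes a genuinely different and more elementary route than the paper. The paper stays inside its model-operator framework: it bounds $\Norm{R(\zeta,T)}{}$ by $\Norm{R(\zeta,M_B)}{}$, writes out the resolvent of $M_B$ explicitly in the Malmquist--Walsh basis via \eqref{eq:OS_formula}, and then lets $\abs{\lambda_i}\to 1$, in which limit the off-diagonal entries vanish and only the diagonal matrix $\mathrm{diag}((\zeta-\lambda_i)^{-1})$ survives, giving the bound $1/d(\zeta,\sigma)$. Your argument instead observes directly that a contraction with unimodular spectrum must be unitary: the Schur form $T=U(D+N)U^*$ together with $\Norm{T}{F}^2=\sum_i\sigma_i(T)^2\leq n=\sum_i\abs{\lambda_i}^2$ forces $\Norm{N}{F}=0$, after which normality of $\zeta-T$ gives $\Norm{R(\zeta,T)}{}=1/d(\zeta,\sigma)$ exactly. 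What your route buys is self-containedness (no Blaschke products, no limiting argument as eigenvalues approach $\partial\mathbb{D}$) and a sharper conclusion: under the hypotheses the inequality \eqref{eq:unimod_spect} is in fact an \emph{equality} for every such $T$, which strengthens the paper's remark that diagonal matrices achieve the bound. What the paper's route buys is uniformity: the same computation with \eqref{eq:OS_formula} is the engine behind Theorem \ref{Th_upper_bound} and Proposition \ref{cor:Davies_Simon_type_th} (where spectra mix interior and boundary points), so the proposition falls out as a degenerate case of machinery that is needed anyway.
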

It follows that if $T$ is a contraction such that
$\sigma(T)\subset\partial\mathbb{D}$ then it satisfies a
\textit{Linear Resolvent Growth} with constant 1, see \cite{BN}. %
Clearly, any diagonal matrix in  $\cC_{n}$ achieves the inequality (\ref{eq:unimod_spect}).

Theorem~\ref{Th_upper_bound} has the disadvantage that the upper bound depends on the slightly unyielding quantity $\Norm{X_{r,\beta}}{}$. Taking together Theorem~\ref{Kronecker_type_theorem} with Proposition~\ref{Th_3_computation_of_norm_of_Xtilde} (both below) leads directly to the following.

\begin{thm}\label{mtt}
{Let $T\in\cC_n$ with minimal polynomial $m$ and spectrum $\sigma$. For any $\zeta\in{\mathbb{D}}-\sigma$ let $p(\zeta,\sigma)=r$ denote the pseudo-hyperbolic distance of $\zeta$ to $\sigma$. Then it holds for the resolvent of $T$ that}
\begin{align*}
\Norm{R(\zeta,T)}{}\leq\frac{1}{d(1,\bar\sigma\zeta)}\frac{1}{r^{\abs{m}}(1-r\abs\zeta)}.
\end{align*}
The estimate is sharp in the limit of large $\abs{m}$.
\end{thm}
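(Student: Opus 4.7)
The plan is to combine the reduction provided by Theorem~\ref{Th_upper_bound} with the explicit norm computation of Proposition~\ref{Th_3_computation_of_norm_of_Xtilde}, and then use the constraint $T\in\cC_{n}$ to eliminate the Stolz parameter $\beta=s(\zeta,\sigma)$ in favour of $r$ and $\Abs{\zeta}$.

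Concretely, I would first invoke Theorem~\ref{Th_upper_bound} with $r=p(\zeta,\sigma)$ and $\beta=s(\zeta,\sigma)$, to obtain
\begin{equation*}
\Norm{R(\zeta,T)}{}\leq\frac{1}{d(1,\bar{\sigma}\zeta)}\cdot\frac{\Norm{X_{r,\beta}}{}}{r^{\Abs{m}}}.
\end{equation*}
The statement thus reduces to proving $\Norm{X_{r,\beta}}{}\leq 1/(1-r\Abs{\zeta})$. To get there, I would first establish the sharp bound
\begin{equation*}
s(\zeta,\sigma)\leq\frac{1-r^{2}}{1-r\Abs{\zeta}}.
\end{equation*}
This rests on the elementary identity $\Abs{1-\bar{\lambda}\zeta}^{2}=\Abs{\zeta-\lambda}^{2}+(1-\Abs{\zeta}^{2})(1-\Abs{\lambda}^{2})$, which for an extremal $\lambda\in\sigma\subset\overline{\mathbb{D}}$ with $p(\zeta,\lambda)=r$ rewrites the defining fraction $(1-\Abs{\lambda}^{2})/\Abs{1-\bar{\lambda}\zeta}$ as $\sqrt{(1-\Abs{\lambda}^{2})(1-r^{2})/(1-\Abs{\zeta}^{2})}$. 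This expression is decreasing in $\Abs{\lambda}$, so its supremum over the pseudo-hyperbolic circle $\{\mu\in\mathbb{D}:p(\zeta,\mu)=r\}$ is attained at the minimum admissible value $\Abs{\lambda}=\Abs{\,\Abs{\zeta}-r\,}/(1-r\Abs{\zeta})$; after simplification this yields $(1-r^{2})/(1-r\Abs{\zeta})$.

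The main technical step, and the one I expect to wrestle with the most, is to feed this bound on $\beta$ into the explicit expression for $\Norm{X_{r,\beta}}{}$ from Proposition~\ref{Th_3_computation_of_norm_of_Xtilde} and verify that the resulting inequality $\Norm{X_{r,\beta}}{}\leq 1/(1-r\Abs{\zeta})$ holds. Since Proposition~\ref{Th_3_computation_of_norm_of_Xtilde} gives $\Norm{X_{r,\beta}}{}$ as a monotone function of $\beta$, I would substitute $\beta=(1-r^{2})/(1-r\Abs{\zeta})$ and then reduce to an algebraic identity; here I anticipate making use of the factorisation $1-r^{2}\Abs{\zeta}^{2}=(1-r\Abs{\zeta})(1+r\Abs{\zeta})$ together with the telescoping structure of the off-diagonal entries of $X_{r,\beta}$.

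For sharpness in the limit $\Abs{m}\to\infty$, I would take the explicit Toeplitz matrix $T^{*}$ from Theorem~\ref{Th_upper_bound} with minimal polynomial $(z-\lambda)^{n}$ and $\lambda\in(-1,1)$ chosen so that $p(\zeta,\lambda)=r$ and $\Abs{\lambda}=\Abs{\,\Abs{\zeta}-r\,}/(1-r\Abs{\zeta})$, which saturates the $\beta$-bound above. Since $T^{*}$ already achieves equality in Theorem~\ref{Th_upper_bound}, sharpness then follows by showing that the closed-form for $\Norm{X_{r,\beta}}{}$ converges to $1/(1-r\Abs{\zeta})$ as $n\to\infty$ at this value of $\beta$, which I expect from a direct asymptotic expansion of Proposition~\ref{Th_3_computation_of_norm_of_Xtilde}'s formula.
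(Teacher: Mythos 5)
Your proposal is correct and follows essentially the same route as the paper: Theorem~\ref{Th_upper_bound}, the maximisation of $s(\zeta,\sigma)$ over the pseudo-hyperbolic circle giving $\beta\leq\beta_{\max}=(1-r^{2})/(1-r\Abs{\zeta})$ (exactly the computation in the proof of Theorem~\ref{Kronecker_type_theorem}), and the norm bound from Proposition~\ref{Th_3_computation_of_norm_of_Xtilde}. The only remark is that the ``main technical step'' you anticipate wrestling with is already item \emph{vi)} of that proposition, which states $\Norm{X_{r,\beta}}{}\leq\beta_{\max}/(1-r^{2})=1/(1-r\Abs{\zeta})$ verbatim (the residual case $s(\zeta,\sigma)<1-r^{2}$ being covered by items \emph{i)}--\emph{ii)}), while sharpness is item \emph{v)} applied at $\beta=\beta_{\max}$, just as you propose.
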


\subsection{Constrained maximization of the resolvent} \label{indadev}

For any given $\zeta\in\overline{\mathbb{D}}$ and $r\in[0,1]$ we study the quantity
\begin{equation}
\cR(\zeta,\, r):=\sup\left\{d(1,\,\bar{\sigma}\zeta)\Norm{ R(\zeta,\,T)}{}\ :\ T\in\cC_{n},\ p(\zeta,\sigma)=r\right\},\label{eq:Kr_Cst_1}
\end{equation}
where $\bar{\sigma}=\{\bar{\lambda}:\lambda\in\sigma\}$. $\cR$ also depends on $n$ but we keep the notation simple and do not write this explicitly.
\begin{thm}
\label{Kronecker_type_theorem} For any $\zeta\in\mathbb{D}$ and
$r\in(0,\,1)$ we have that
\begin{equation*}
\cR(\zeta,\, r)=\frac{1}{r^{n}}\Norm{X_{r,\beta_{\max}}}{},
\end{equation*}
where $\beta_{\max}=\frac{1-r^{2}}{1-r\Abs{\zeta} }$. The estimate is achieved by an analytic Toeplitz matrix
and in the limit of large matrices we find
\[
\cR(\zeta,\, r)\sim\frac{1}{r^{n}}\frac{1}{1-r\Abs{\zeta} }\quad\textnormal{\emph{as}}\quad  n\rightarrow\infty.
\]
\end{thm}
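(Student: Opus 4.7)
The plan is to apply Theorem~\ref{Th_upper_bound} and then maximize each factor on the right-hand side. Replacing $T$ by $e^{-i\arg\zeta}T$ preserves contractivity, the resolvent norm at $\zeta$, the pseudo-hyperbolic distance $p(\zeta,\sigma)$, and the analytic Toeplitz class (up to a rotation of $\sigma$), so it suffices to treat $\zeta\in[0,1)$. Theorem~\ref{Th_upper_bound} then gives
\[
d(1,\bar\sigma\zeta)\|R(\zeta,T)\|\leq \|X_{r,s(\zeta,\sigma)}^{(|m|)}\|/r^{|m|}
\]
for every $T\in\cC_n$ with $p(\zeta,\sigma)=r$, and it remains to show that the right-hand side is dominated by $\|X_{r,\beta_{\max}}^{(n)}\|/r^n$ with equality attained by an analytic Toeplitz contraction.

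The upper bound rests on three observations. First, $s(\zeta,\sigma)\leq\beta_{\max}$: the identity $1-p(\zeta,\lambda)^2=(1-|\zeta|^2)(1-|\lambda|^2)/|1-\bar\lambda\zeta|^2$ rewrites $(1-|\lambda|^2)/|1-\bar\lambda\zeta|$ as $(1-r^2)|1-\bar\lambda\zeta|/(1-|\zeta|^2)$ on $\{p(\zeta,\lambda)=r\}$, and parametrising $\lambda=(\zeta-w)/(1-\bar\zeta w)$, $|w|=r$, one computes $|1-\bar\zeta\lambda|=(1-|\zeta|^2)/|1-\bar\zeta w|\leq(1-|\zeta|^2)/(1-r|\zeta|)$, maximised at $w=r\zeta/|\zeta|$. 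Second, $r^{n-|m|}X_{r,\beta}^{(|m|)}$ coincides with the trailing $|m|\times|m|$ principal submatrix of $X_{r,\beta}^{(n)}$, so $\|X_{r,\beta}^{(|m|)}\|/r^{|m|}\leq\|X_{r,\beta}^{(n)}\|/r^n$. Third, $\|X_{r,\beta}^{(n)}\|$ is nondecreasing in $\beta$: since $X_{r,\beta}$ has nonnegative entries, Perron--Frobenius yields a nonnegative top singular vector $v$, and writing $X_{r,\beta'}=X_{r,\beta}+(\beta'-\beta)N$ with the nilpotent $N\geq 0$ entrywise, the cross term $\langle X_{r,\beta}v,Nv\rangle=r^{n-1}\langle v,Nv\rangle+\beta\|Nv\|^2\geq 0$ gives $\|X_{r,\beta'}\|\geq\|X_{r,\beta'}v\|\geq\|X_{r,\beta}v\|=\|X_{r,\beta}\|$ whenever $\beta'\geq\beta\geq 0$.

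Saturation is obtained from $\lambda^*=(\zeta-r)/(1-r\zeta)\in(-1,1)$, the M\"obius extremiser of the first observation: the matrix $T^*$ from Theorem~\ref{Th_upper_bound} at $\lambda=\lambda^*$ is an $n\times n$ analytic Toeplitz contraction with minimal polynomial $(z-\lambda^*)^n$, $p(\zeta,\lambda^*)=r$, and $s(\zeta,\{\lambda^*\})=\beta_{\max}$, and it saturates \eqref{eq:gen_upp_bd}, whence $\cR(\zeta,r)=\|X_{r,\beta_{\max}}^{(n)}\|/r^n$; for general complex $\zeta$, scaling by $e^{i\arg\zeta}$ recovers an extremiser that is still analytic Toeplitz. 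For the asymptotic, direct summation gives $(X_{r,\beta}^TX_{r,\beta})_{jk}=\beta^2r^{j+k-2}/(1-r^2)+O(r^{2n})$ uniformly in $j,k\leq n$, so in the limit the Gram matrix becomes the rank-one operator $v_\infty v_\infty^T$ with $v_\infty=(\beta/\sqrt{1-r^2})(r^{j-1})_{j\geq 1}$ of $\ell^2$-norm $\beta/(1-r^2)$; combined with $\mathrm{tr}(X^TX)=\beta^2/(1-r^2)^2+O(nr^{2n})$, which uniformly controls the deviation from rank one, this forces $\|X_{r,\beta}^{(n)}\|\to\beta/(1-r^2)$, and at $\beta=\beta_{\max}$ this limit is $1/(1-r|\zeta|)$, yielding $\cR(\zeta,r)\sim 1/(r^n(1-r|\zeta|))$. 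The main obstacle I anticipate is the $\beta$-monotonicity: $X_{r,\beta}$ is strongly non-normal, so variational comparisons across different $\beta$ are not automatic, and the nonnegativity-based Perron--Frobenius structure is essential.
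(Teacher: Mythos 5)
Your argument is correct, and its skeleton is the same as the paper's: reduce to real $\zeta$ by a rotation (the paper's Lemma~\ref{lemma_real_zeta}), feed Theorem~\ref{Th_upper_bound} with the bound $s(\zeta,\sigma)\le\beta_{\max}$ obtained by maximizing $\lambda\mapsto(1-|\lambda|^2)/|1-\bar\lambda\zeta|$ over the pseudo-hyperbolic circle, and saturate with the one-eigenvalue model matrix at $\lambda^*=(\zeta-r)/(1-r\zeta)$, which is exactly the paper's extremiser $\lambda_{\max}$. Where you genuinely diverge is in how you supply the facts the paper delegates to Proposition~\ref{Th_3_computation_of_norm_of_Xtilde}. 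First, you pass from degree $\abs{m}$ to $n$ by noting that $r^{n-\abs{m}}X_{r,\beta}^{(\abs{m})}$ is a corner block of $X_{r,\beta}^{(n)}$, so its norm is dominated by compression; this replaces the paper's monotonicity-in-$n$ statement (part \emph{iii}), which is proved there only for $\beta\ge 1-r^2$ and by a considerably more involved argument, and your version works for all $\beta\ge 0$. Second, your $\beta$-monotonicity via a nonnegative top singular vector of $X^{T}X$ and a nonnegative cross term is a direct variational rendering of the paper's entrywise Perron--Frobenius monotonicity applied to the Hankel symmetrization $X_{r,\beta}J$; both are sound. Third, and most notably, you derive the limit $\Norm{X_{r,\beta}}{}\to\beta/(1-r^2)$ from entrywise convergence of the Gram matrix to a rank-one operator together with the trace bound, which is far shorter than the paper's route through the Chebyshev secular equation~\eqref{eq:rrr} and the root counting of Lemma~\ref{lem3}. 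One small inaccuracy: the off-diagonal Gram entries carry errors of order $r^{2n-k+j-2}$, which in the worst case is $O(r^{n-1})$ rather than $O(r^{2n})$; since the Frobenius norm of the error matrix still vanishes as $n\to\infty$, the conclusion is unaffected. The net effect of your choices is a proof of this theorem that is self-contained and bypasses most of Proposition~\ref{Th_3_computation_of_norm_of_Xtilde}.
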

Recall that Kronecker's result (\ref{eq:Kroneck_result}) is
\[
\cR(0,\, r)=\frac{1}{r^{n}}.
\]
 Davies and Simon (\ref{eq:DS_result}) have shown that
\[
\cR(\zeta,\,1)=\cot(\frac{\pi}{4n}),\qquad\zeta\in\partial\mathbb{D}-\{1\},
\]
which can be seen in the above theorem by continuously moving $\zeta$ to $\partial\mathbb{D}$. Note, however, the fundamental difference in the large $n$ behavior of the resolvent for $\zeta\in\mathbb{D}$ and $\zeta\in\partial\mathbb{D}$. This is a consequence of the fact that for $\zeta$ inside the unit disk $\norm{X_{r,\beta}}{}$ is bounded from above, see Proposition~\ref{Th_3_computation_of_norm_of_Xtilde}.
%
%


Finally for $\zeta\in\partial\mathbb{D}$ and a polynomial $P_{\sigma}$ with zero set $\sigma$ so that $p(\zeta,\sigma)=1$ we can compute the constant
\begin{equation}
\cK(\zeta,\,P_{\sigma})=\sup\left\{d(\zeta,\,\sigma)\Norm{R(\zeta,\,T)}{}\ :\ T\in\cC_{n},\ m_{T}=P_{\sigma}\right\},\label{eq:DS_constant}
\end{equation}
which is bounded by $\cR(\zeta,\,r)$.
\begin{prop}
\label{cor:Davies_Simon_type_th} Let $P_{\sigma}$ be a polynomial
with zero set $\sigma=\sigma_{1}\cup\sigma_{2}$, $\sigma_{1}\subset\mathbb{D}$
and $\sigma_{2}\subset\partial\mathbb{D}$ so that $P_{\sigma}$=$P_{\sigma_{1}}P_{\sigma_{2}}.$
For any $\zeta\in\partial\mathbb{D}-\sigma$, we have 
\begin{align}
\cK(\zeta,\, P_{\sigma}) & \leq\max(1,\,\Norm{X_{1,\, s(\zeta,\,\sigma_{1})}}{})\leq\Norm{X_{1,\,2}}{}=\cot(\frac{\pi}{4\abs{P_{\sigma_{1}}}}),\label{eq:th4_majoration}
\end{align}
where $X_{1,\, s(\zeta,\,\sigma)},\ X_{1,\,2}\in\cM_{\abs{P_{\sigma_{1}}}}(\mathbb{C})$.
Moreover, fixing $\zeta\in\partial\mathbb{D}-\sigma,$ $n_{1},\, n_{2}\geq1$
and $0\leq\rho_1<1$ and taking the supremum of $\cK(\zeta,\, P_{\sigma})$
over all sets $\sigma=\sigma_{1}\cup\sigma_{2}$ with $n_{1}=\abs{P_{\sigma_{1}}}$
and $n_{2}=\abs{P_{\sigma_{2}}}$ such that $\max_{\lambda\in\sigma_{1}}\abs{\lambda}\leq\rho_1$
we get: 
\begin{equation}
\sup\cK(\zeta,\, P_{\sigma})=\Norm{X_{1,\,1+\rho_1}}{}\label{eq:th4_egalite}
\end{equation}
where $X_{1,\,1+\rho_1}\in\cM_{n_{1}}(\mathbb{C})$. In particular,
the supremum of the above quantity taken over $0\leq\rho_1<1$ is equal
to 
\[
\cot(\frac{\pi}{4n_{1}}).
\]
\end{prop}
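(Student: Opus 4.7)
The plan is to exploit a rigidity property of contractions: every eigenvector whose eigenvalue lies on $\partial\mathbb{D}$ spans a reducing subspace. Indeed, if $Tx=\lambda x$ with $|\lambda|=1$ and $\|T\|\leq 1$, then $\langle(I-T^*T)x,x\rangle=0$ combined with $I-T^*T\geq 0$ forces $T^*x=\bar\lambda x$. Applied to every eigenvector associated with $\sigma_2\subset\partial\mathbb{D}$, this yields an orthogonal decomposition
\[
T=T_1\oplus T_2,\qquad T_2\in\cC_{n_2}\text{ unitary with }\sigma(T_2)=\sigma_2,\qquad T_1\in\cC_{n_1}\text{ with }\sigma(T_1)=\sigma_1.
\]
Since the roots of $m_{T_1}$ and $m_{T_2}$ are disjoint, $m_T=\mathrm{lcm}(m_{T_1},m_{T_2})=m_{T_1}m_{T_2}$; comparing with $m_T=P_{\sigma_1}P_{\sigma_2}$ and counting degrees forces $m_{T_1}=P_{\sigma_1}$ and $m_{T_2}=P_{\sigma_2}$, so in particular $T_1$ has size exactly $n_1$.

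The inequality \eqref{eq:th4_majoration} now follows from $\Norm{R(\zeta,T)}{}=\max(\Norm{R(\zeta,T_1)}{},\Norm{R(\zeta,T_2)}{})$ by bounding each block. Proposition~\ref{prop:unimod_spec} gives $\Norm{R(\zeta,T_2)}{}\leq 1/d(\zeta,\sigma_2)$. For $T_1$, note that $\zeta\in\partial\mathbb{D}$ implies $p(\zeta,\sigma_1)=1$, while $|\zeta|=1$ gives $|1-\bar\lambda\zeta|=|\zeta-\lambda|$, hence $d(1,\bar{\sigma}_1\zeta)=d(\zeta,\sigma_1)$; Theorem~\ref{Th_upper_bound} then delivers
\[
\Norm{R(\zeta,T_1)}{}\leq\frac{1}{d(\zeta,\sigma_1)}\Norm{X_{1,s(\zeta,\sigma_1)}}{},
\]
with $X_{1,s(\zeta,\sigma_1)}\in\cM_{n_1}$. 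Multiplying through by $d(\zeta,\sigma)=\min(d(\zeta,\sigma_1),d(\zeta,\sigma_2))$ yields \eqref{eq:th4_majoration}. The subsequent chain $\max(1,\Norm{X_{1,s(\zeta,\sigma_1)}}{})\leq\Norm{X_{1,2}}{}=\cot(\pi/(4n_1))$ then follows from $s(\zeta,\sigma_1)\leq 1+\rho(T_1)\leq 2$ (the observation made just before Theorem~\ref{Th_upper_bound}), monotonicity of $\beta\mapsto\Norm{X_{1,\beta}}{}$, the trivial lower bound $\Norm{X_{1,\beta}}{}\geq 1$, and Proposition~\ref{Th_3_computation_of_norm_of_Xtilde}.

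For the constrained supremum \eqref{eq:th4_egalite}, the upper bound uses the elementary inequality $(1-|\lambda|^2)/|1-\bar\lambda\zeta|\leq(1-|\lambda|^2)/(1-|\lambda|)=1+|\lambda|\leq 1+\rho_1$ valid for $\lambda\in\sigma_1$, so $s(\zeta,\sigma_1)\leq 1+\rho_1$ and the previous estimate gives $\cK(\zeta,P_{\sigma})\leq\Norm{X_{1,1+\rho_1}}{}$. For the matching lower bound, by rotational symmetry (conjugate by $T\mapsto\bar\zeta T$) reduce to $\zeta=1$, take $T_1=T^*$ the explicit $n_1\times n_1$ Toeplitz matrix from Theorem~\ref{Th_upper_bound} with $\lambda=\rho_1\in(-1,1)$, which has minimal polynomial $(z-\rho_1)^{n_1}$ and saturates $\Norm{R(1,T_1)}{}=\Norm{X_{1,1+\rho_1}}{}/(1-\rho_1)$, and adjoin any unitary $T_2$ supported on $n_2$ points of $\partial\mathbb{D}$ chosen far enough from $1$ so that $d(1,\sigma_2)\geq 1-\rho_1$. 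Then $T=T_1\oplus T_2$ is admissible and $d(1,\sigma)\Norm{R(1,T)}{}=\Norm{X_{1,1+\rho_1}}{}$, as required.

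The concluding assertion $\sup_{0\leq\rho_1<1}\Norm{X_{1,1+\rho_1}}{}=\cot(\pi/(4n_1))$ is immediate from monotonicity and continuity of $\beta\mapsto\Norm{X_{1,\beta}}{}$ combined with Proposition~\ref{Th_3_computation_of_norm_of_Xtilde}. The main obstacle is the orthogonal-decomposition step: one must verify that the unimodular part splits off cleanly and that, once it does, the minimal polynomials combine multiplicatively in just the right way to give $m_{T_1}=P_{\sigma_1}$; after that, everything is a bookkeeping combination of Theorem~\ref{Th_upper_bound} applied to $T_1$ and Proposition~\ref{prop:unimod_spec} applied to $T_2$.
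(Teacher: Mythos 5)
Your proof is correct, and the key structural step is obtained by a genuinely different route from the paper's. The paper derives the block-diagonal form $\mathrm{diag}(A_1,A_2)$ by passing to the model operator $M_B$ via \cite[Theorem III.2]{OS}, writing its resolvent in the Malmquist--Walsh basis using \eqref{eq:OS_formula}, and letting $\abs{\mu}\to1$ for $\mu\in\sigma_2$, which kills the off-diagonal couplings. You instead split the contraction itself, using the elementary fact that a unimodular eigenvector of a contraction is reducing ($\langle(I-T^*T)x,x\rangle=0$ plus positivity gives $T^*x=\bar\lambda x$), so $T=T_1\oplus T_2$ with $T_2$ unitary carrying $\sigma_2$ and $\sigma(T_1)\subset\mathbb{D}$; the divisibility argument $m_T=\mathrm{lcm}(m_{T_1},m_{T_2})$ with disjoint root sets then pins down $m_{T_1}=P_{\sigma_1}$ and $m_{T_2}=P_{\sigma_2}$. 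This makes the decomposition an exact operator-theoretic fact rather than a limit of model matrices, and it confines the model-space machinery to Theorem~\ref{Th_upper_bound} (applied to $T_1$) and Proposition~\ref{prop:unimod_spec} (applied to $T_2$); the rest --- multiplying by $d(\zeta,\sigma)\leq d(\zeta,\sigma_i)$, the bound $s(\zeta,\sigma_1)\leq1+\rho_1$, and the extremal example $T_1=T^*$ with $\lambda=\rho_1$ padded by a unitary block supported far from $\zeta$ --- matches the paper's computation. Three small remarks. First, the inference ``so in particular $T_1$ has size exactly $n_1$'' is unjustified (only $\deg m_{T_1}=n_1$ follows; $T_1$ may act on a larger space), but this is harmless because Theorem~\ref{Th_upper_bound} bounds the resolvent through $\abs{m}$, not the matrix size. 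Second, in dropping the $\max(1,\cdot)$ to conclude $\cK(\zeta,P_\sigma)\leq\Norm{X_{1,\,1+\rho_1}}{}$ you implicitly use $\Norm{X_{1,\,1+\rho_1}}{}\geq\Norm{X_{1,\,1}}{}=\frac{1}{2\sin(\pi/(4n_1+2))}\geq1$, which is worth stating. Third, your choice of $n_2$ distinct unimodular points for $\sigma_2$ in the lower-bound construction is actually cleaner than the paper's $P_{\sigma_2}=(z+1)^{n_2}$, since a contraction cannot have a multiple root of its minimal polynomial on $\partial\mathbb{D}$.
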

\subsection{Computing $\Norm{X_{r,\,\beta}}{}$}

In this subsection we study the quantity $\norm{X_{r,\,\beta}}{}$ for $r\in(0,1]$ and $\beta\in[0,2]$. We provide an implicit formula for $\Norm{X_{r,\,\beta}}{}$.

\begin{prop}
\label{Th_3_computation_of_norm_of_Xtilde} Let $X_{r,\,\beta}$ with $r\in(0,1]$ and $\beta\in[0,2]$ be the $n\times n$ matrix introduced in \eqref{eq:X_xi_beta}. The following assertions hold.
\begin{enumerate}[i)]

\item If $\beta=1-r^2$ then $\Norm{X_{r,\,\beta}}{}=1$.

\item At fixed $n$, $\Norm{X_{r,\,\beta}}{}$ grows monotonically with
$r$ and $\beta$.

\item At fixed $r$ and {$1-r^2\leq\beta$}, $\Norm{X_{r,\,\beta}}{}$ grows monotonically
with $n$.

\item Let $U_n$ denote the $n$-th degree Chebychev
polynomial of second kind, i.e.~$U_n(cos(\theta))=\frac{\sin{((n+1)\theta)}}{\sin{(\theta)}}$. If $\beta\neq1-r^{2}$ then $\Norm{X_{r,\,\beta}}{}=\abs{\lambda^*}$,
where $\lambda^*$ is the largest in magnitude solution of the polynomial equation
\begin{equation}
\mu^{n+1}r U_n\left(\frac{\gamma}{2\mu}\right)+\mu^n(\lambda^2-r^{2n-2}(\beta-1)^2)U_{n-1}\left(\frac{\gamma}{2\mu}\right)=0\label{eq:rrr}
\end{equation}
with $\mu=\lambda^2+r^{2n-2}(\beta-1)$ and $\gamma=-\lambda^2(r+1/r)+r^{2n-2}(r+\frac{1}{r}(\beta-1)^{2})$.

\item If $r\in(0,1)$ and $1-r^2\leq\beta$ we have the limit
\begin{equation*}
\Norm{X_{r,\,\beta}}{}\rightarrow\frac{\beta}{1-r^{2}},\qquad n\rightarrow\infty.\label{eq:limit}
\end{equation*}
\item{For any $n$, $r\in(0,1)$, $\beta_{\max}\in(0,2]$ and $1-r^2\leq\beta\leq\beta_{\max}$ we have}
\begin{align*}
\Norm{X_{r,\,\beta}}{}\leq\frac{\beta_{\max}}{1-r^2}.
\end{align*}
\end{enumerate}
\end{prop}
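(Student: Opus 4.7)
The plan is to prove (iv) as the central computational step and derive the remaining parts from it. Part (ii) is an easy starting point: since $r,\beta\geq 0$, the matrix $X_{r,\beta}$ has entrywise non-negative entries, each monotone non-decreasing in $r$ and in $\beta$. For any two entrywise non-negative matrices $A\leq B$, one has $A^*A\leq B^*B$ entrywise, and Perron--Frobenius monotonicity of the spectral radius for non-negative matrices gives $\Norm{A}{}^2=\rho(A^*A)\leq\rho(B^*B)=\Norm{B}{}^2$.

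The heart of the proposition is (iv), for which the key observation is the factorization
\begin{equation*}
X_{r,\beta}\;=\;r^{n-1}(rI-P)^{-1}\bigl(rI+(\beta-1)P\bigr),
\end{equation*}
where $P$ is the $n\times n$ nilpotent subdiagonal shift; this is verified by expanding $(rI-P)^{-1}=\sum_{k=0}^{n-1}P^k/r^{k+1}$ (a finite sum, since $P^n=0$). Writing $A:=rI-P$ and $B:=rI+(\beta-1)P$ one obtains $X^*X=r^{2n-2}B^*(AA^*)^{-1}B$. The identity $PP^*=I-e_1e_1^*$ then gives the Jacobi-plus-boundary decompositions
\begin{align*}
AA^*&=(r^2
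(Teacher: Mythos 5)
Your proposal is incomplete: it breaks off mid-formula (at the expansion of $AA^*$) and, as submitted, proves only part \emph{ii)} while merely setting up part \emph{iv)}. Parts \emph{i)}, \emph{iii)}, \emph{v)} and \emph{vi)} are not addressed at all, and these are where most of the actual work lies (the paper needs the root-counting of Lemma \ref{lem3}, a trace argument for the limit in \emph{v)}, and a Frobenius-norm comparison for the monotonicity in $n$ in \emph{iii)}). So there is a genuine gap, not in the sense of a wrong idea but of a proof that stops after the first fifth of the argument.

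What you do have is sound and worth noting. Your part \emph{ii)} is correct: entrywise $A\leq B$ for non-negative matrices gives $A^*A\leq B^*B$ entrywise, and Perron--Frobenius monotonicity yields $\Norm{A}{}\leq\Norm{B}{}$; this is the same mechanism as the paper, which instead symmetrizes to the Hankel matrix $\widetilde X=X_{r,\beta}J$ and applies Perron--Frobenius directly. Your factorization
$X_{r,\beta}=r^{n-1}(rI-P)^{-1}\bigl(rI+(\beta-1)P\bigr)$
with $P$ the nilpotent subdiagonal shift is correct (one checks $r^{n-1}(rI-P)^{-1}(rI+(\beta-1)P)=r^{n-1}I+\beta\sum_{k\geq1}r^{n-1-k}P^k$, which matches \eqref{eq:X_xi_beta}), and reducing $\det(X^*X-\lambda^2)=0$ to the pencil $\det\bigl(r^{2n-2}BB^*-\lambda^2AA^*\bigr)=0$ with $AA^*$ and $BB^*$ tridiagonal plus a rank-one boundary term is a genuinely different and cleaner route to the Chebyshev equation \eqref{eq:rrr} than the paper's explicit computation of $\widetilde X^2-\lambda^2$ followed by four rounds of row/column manipulations. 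But you never carry out that reduction, never identify $\Norm{X_{r,\beta}}{}$ with the largest root, and never do the asymptotic analysis. In particular, part \emph{i)} does not follow from the factorization alone (the paper obtains it from the identity $\frac{1}{r^n}X_{r,1-r^2}=M_{b_{-r}^n}^{-1}$ and $\Norm{M_B^{-1}}{}=1/\abs{B(0)}$), and parts \emph{v)} and \emph{vi)} require controlling \emph{all} roots of \eqref{eq:rrr}, which is the content of Lemma \ref{lem3} and the longest portion of the paper's proof. To make this a proof you would need to complete the determinant evaluation and then supply the entire root-location and asymptotic analysis.
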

The equations in \emph{iv)} (see~\eqref{eq:rrr}) are discussed in Lemma~\ref{lem3} below.
For instance we can recover known values of $\Norm{X_{1,\,\beta}}{}$ from~Proposition~\ref{Th_3_computation_of_norm_of_Xtilde},~\emph{iv)}. In this case
\begin{equation*}
\Norm{X_{1,\,\beta}}{}=\frac{1}{2}\sqrt{(\beta-2)^{2}+\frac{\beta^{2}}{\cot(\theta^{*}/2)}},\label{eq:formula_norm_zeta_1}
\end{equation*}
where $\theta^{*}$ is the unique solution of \eqref{eq:rrr} in $[\frac{2n-1}{2n}\pi,\,\pi)$ and it follows, see~\cite{OS}, that
\begin{equation*}
\Norm{X_{1,\,0}}{}=1,\qquad\Norm{X_{1,\,1}}{}=\frac{1}{2\sin(\frac{\pi}{4n+2})},\qquad\Norm{X_{1,\,2}}{}=\cot(\frac{\pi}{4n}).
\end{equation*}

\begin{lem}
\label{lem3}Let $r\in(0,1]$ and $\beta\in(0,2]$ such that $\beta-1+r^{2}\neq0$.
We consider the equation 

\begin{equation}
\cot(n\theta)+\frac{(2-\beta)r}{r^{2}+(\beta-1)}\frac{1}{\sin(\theta)}+\frac{r^{2}-(\beta-1)}{r^{2}+(\beta-1)}\cot(\theta)=0.
\label{eq:rrrr}
\end{equation}

\begin{enumerate}
\item If $r=1$, then \eqref{eq:rrrr} has exactly $2n$ distinct solutions
in $[-\pi,\,\pi)$. 
\item If $r=\Abs{\beta-1}\notin\{0,\,2\}$, we distinguish two subcases:

\begin{enumerate}
\item if $\beta<1$, then \eqref{eq:rrrr} has exactly $2n-2$ distinct solutions
in $[-\pi,\,\pi)$ for any $n\geq1$, 
\item if\textbf{ $\beta>1$}, then \eqref{eq:rrrr} has exactly $2n-2$ distinct
solutions in $[-\pi,\,\pi)$ if and only if $n>\frac{\beta}{2(2-\beta)}$. If $n\leq\frac{\beta}{2(2-\beta)}$, then  \eqref{eq:rrrr} has exactly $2n$ distinct
solutions in $[-\pi,\,\pi)$.
\end{enumerate}
\item If $\beta>1$, and $r\notin\{1,\,\beta-1\}$, we distinguish two subcases:

\begin{enumerate}
\item if $1<\beta<1+r$, then \eqref{eq:rrrr} has exactly $2n-2$ distinct
solutions in $[-\pi,\,\pi)$ if and only if $n>\frac{\beta-1+r^{2}}{(1-r)(r+\beta-1)}$. If $n\leq\frac{\beta-1+r^{2}}{(1-r)(r+\beta-1)}$, then  \eqref{eq:rrrr} has exactly $2n$ distinct solutions in $[-\pi,\,\pi)$.
 
\item If\textbf{ $\beta>1+r$}, and $n\geq2$, then \eqref{eq:rrrr} has exactly $2n-4$
distinct solutions in $[-\pi,\,\pi)$ if and only if $n>\frac{\beta-1+r^{2}}{(1-r)(r+\beta-1)}$.  If $\frac{\beta-1+r^{2}}{(1+r)(\beta-1-r)} < n \leq\frac{\beta-1+r^{2}}{(1-r)(r+\beta-1)}$, then \eqref{eq:rrrr} has exactly $2n-2$ distinct solutions in $[-\pi,\,\pi)$. 
Finally, if $n\leq\frac{\beta-1+r^{2}}{(1+r)(\beta-1-r)}$, then  \eqref{eq:rrrr} has exactly $2n$ distinct solutions in $[-\pi,\,\pi)$.

\end{enumerate}
\item If $\beta<1$, and $r\notin\{1,\,1-\beta\}$, we distinguish two subcases:

\begin{enumerate}
\item if $1-r^{2}<\beta<1$, then \eqref{eq:rrrr} has exactly $2n-2$ distinct
solutions in $[-\pi,\,\pi)$ if and only if $n>\frac{\beta-1+r^{2}}{(1-r)(r+\beta-1)}$, 
\item if $0<\beta<1-r^{2}$, we distinguish two subcases:

\begin{enumerate}
\item if $0<\beta<1-r$, then \eqref{eq:rrrr} has exactly $2n-4$ distinct
solutions in $[-\pi,\,\pi)$ if and only if $n>\frac{\beta-1+r^{2}}{(1-r)(r+\beta-1)}$. If $n\leq\frac{\beta-1+r^{2}}{(1-r)(r+\beta-1)}$, then  \eqref{eq:rrrr} has exactly $2n$ distinct solutions in $[-\pi,\,\pi)$.

\item If $1-r<\beta<1-r^{2}$, then \eqref{eq:rrrr} has exactly $2n-2$
distinct solutions on $[-\pi,\,\pi)$ for any $n\geq2$. 
\end{enumerate}
\end{enumerate}
\item If $\beta=1$ then \eqref{eq:rrrr} has exactly $2n-2$ distinct solutions
in $[-\pi,\,\pi)$ if and only if $n>\frac{r}{1-r}$. If $n\leq\frac{r}{1-r}$, then  \eqref{eq:rrrr} has exactly $2n$ distinct
solutions in $[-\pi,\,\pi)$.

\end{enumerate}
\end{lem}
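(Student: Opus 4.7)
The plan is to reduce equation~\eqref{eq:rrrr} to a real-root counting problem for a polynomial of degree $n$ on the interval $(-1,1)$, and then track how roots migrate across the endpoints $\pm 1$ as the parameters $n,r,\beta$ vary.

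First I would clear the denominators by multiplying~\eqref{eq:rrrr} by $(r^{2}+\beta-1)\sin(n\theta)\sin\theta$ and applying the product-to-sum identities $2\cos(n\theta)\sin\theta=\sin((n+1)\theta)-\sin((n-1)\theta)$ and $2\sin(n\theta)\cos\theta=\sin((n+1)\theta)+\sin((n-1)\theta)$. This turns the equation into $F(\theta)=0$ where
\[
F(\theta) = r^{2}\sin((n+1)\theta)+(2-\beta)r\sin(n\theta)+(1-\beta)\sin((n-1)\theta).
\]
The compact form $F(\theta)=\operatorname{Im}\bigl[e^{i(n-1)\theta}(re^{i\theta}-(\beta-1))(re^{i\theta}+1)\bigr]$ displays $F$ as a real odd trigonometric polynomial of degree $n+1$. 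A direct evaluation $F(k\pi/n)=(-1)^{k}(r^{2}+\beta-1)\sin(k\pi/n)$ and the standing hypothesis $r^{2}+\beta-1\neq 0$ show that the only zeros of $F$ introduced spuriously by clearing denominators are $\theta\in\{0,-\pi\}$. Using $\sin((n\pm 1)\theta)=\sin(n\theta)\cos\theta\pm\cos(n\theta)\sin\theta$ together with $\sin(n\theta)=\sin\theta\cdot U_{n-1}(\cos\theta)$, I would then factor $F(\theta)=\sin\theta\cdot h(\cos\theta)$ with
\[
h(x) = rU_{n-1}(x)[2rx+(2-\beta)]-(r^{2}+\beta-1)U_{n-2}(x),
\]
a polynomial of degree $n$ with positive leading coefficient $2^{n}r^{2}$. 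Since $F$ is odd, its nonzero zeros pair as $\pm\theta_{0}$ and each root $x_{0}\in(-1,1)$ of $h$ produces exactly one such pair via $\theta_{0}=\arccos(x_{0})$. Hence the number of solutions of~\eqref{eq:rrrr} in $[-\pi,\pi)$ equals $2\cdot\#\{x\in(-1,1):h(x)=0\}$.

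Second, using $U_{n-1}(\pm 1)=(\pm 1)^{n-1}n$ and $U_{n-2}(\pm 1)=(\pm 1)^{n}(n-1)$, I compute the endpoint evaluations
\begin{align*}
h(1) &= n(r+1)(r+1-\beta)+(r^{2}+\beta-1),\\
(-1)^{n}h(-1) &= (r^{2}+\beta-1)-n(1-r)(r+\beta-1),
\end{align*}
both affine in $n$. Their zeros $n=(r^{2}+\beta-1)/[(1+r)(\beta-1-r)]$ and $n=(r^{2}+\beta-1)/[(1-r)(r+\beta-1)]$ are exactly the thresholds appearing in the Lemma (the specialisations $r/(1-r)$ in case~5 and $\beta/[2(2-\beta)]$ in case~2b being recovered by direct substitution). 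Starting from the generic count $2n$ (all $n$ roots of $h$ in $(-1,1)$, valid for small $n$ where both endpoint values agree in sign with the leading coefficient), each time $n$ crosses a threshold the sign of $h(\pm 1)$ flips and by the intermediate-value theorem one real root of $h$ exits through the corresponding endpoint, reducing the count by $2$. The five-case partition of the Lemma corresponds exactly to the sign dichotomy of the two coefficients $(r+1)(r+1-\beta)$ and $(1-r)(r+\beta-1)$, together with the degenerate strata $r=1$ (case~1), $r=|\beta-1|$ (case~2), and $\beta=1$ (case~5) on which one coefficient vanishes and the corresponding threshold disappears.

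The main obstacle is to rule out the alternative scenario in which two real roots of $h$ collide inside $(-1,1)$ and depart as a complex-conjugate pair — this would decrease the count without any sign flip at $\pm 1$. To exclude this I would invoke the reciprocal structure of the setup: the polynomial $Q(z)=z^{n+1}[P(z)-P(1/z)]$ with $P(z)=z^{n-1}(rz-(\beta-1))(rz+1)$ satisfies the anti-reciprocal identity $z^{2n+2}Q(1/z)=-Q(z)$, so after dividing out the automatic roots at $z=\pm 1$ the quotient $R(z)=Q(z)/(z^{2}-1)$ is reciprocal of degree $2n$; writing $R(z)=z^{n}S(z+1/z)$ gives a polynomial $S$ of degree $n$ with $S(2x)=h(x)$. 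Roots of the reciprocal $R$ on the unit circle correspond to real roots of $h$ in $(-1,1)$, while roots off the unit circle pair as $\{w,1/w\}$ and correspond to real roots of $h$ outside $[-1,1]$ (or, for complex-conjugate quadruples, to complex roots of $h$ off the real axis). The point is that a root of $R$ on the unit circle can leave only by splitting into a reciprocal pair off the circle, which at the level of $h$ means a real root exiting $(-1,1)$ through $\pm 1$ — never by becoming a complex root inside $(-1,1)$. Together with the sign-flip analysis this accounts for every transition; the remaining bookkeeping through the thirteen subcases is routine but lengthy.
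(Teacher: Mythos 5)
Your reduction is correct and attractive: clearing denominators does give $F(\theta)=r^{2}\sin((n+1)\theta)+(2-\beta)r\sin(n\theta)+(1-\beta)\sin((n-1)\theta)$, the factorization $F(\theta)=\sin\theta\cdot h(\cos\theta)$ with $h(x)=rU_{n-1}(x)\left[2rx+2-\beta\right]-(r^{2}+\beta-1)U_{n-2}(x)$ is right, and your endpoint evaluations $h(1)$ and $(-1)^{n}h(-1)$ do reproduce exactly the thresholds of the Lemma (they are the same quantities the paper obtains as $\lim_{\theta\to 0^{+}}\varphi_{r,\beta}(\theta)/(-\cot(n\theta))$ and $\lim_{\theta\to\pi^{-}}\varphi_{r,\beta}(\theta)/(-\cot(n\theta))$, up to the factor $r^{2}+\beta-1$). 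The gap is in the counting step. First, the ``root migration'' picture is not an argument: $n$ is a discrete parameter and $h$ changes globally with $n$, so nothing follows from watching signs of $h(\pm1)$ ``flip as $n$ crosses a threshold.'' Second, and more seriously, the mechanism you invoke to exclude two real roots of $h$ colliding in $(-1,1)$ and leaving as a complex pair is false: for a reciprocal polynomial $R$ with real coefficients the root set is invariant under both $z\mapsto\bar z$ and $z\mapsto 1/z$, so two unit-circle roots $e^{i\theta_{1}},e^{i\theta_{2}}$ (with their conjugates) can perfectly well collide and depart as a conjugate-reciprocal quadruple $\{w,\bar w,1/w,1/\bar w\}$ off the circle --- which is precisely two real roots of $h$ becoming a complex pair without anything passing through $x=\pm1$. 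The reciprocal structure alone rules out nothing.

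The fix is already contained in a computation you made for a different purpose. From $F(k\pi/n)=(-1)^{k}(r^{2}+\beta-1)\sin(k\pi/n)$ one gets $h(\cos(k\pi/n))=(-1)^{k}(r^{2}+\beta-1)$ for $k=1,\dots,n-1$, i.e.\ $h$ alternates in sign at the $n-1$ Chebyshev nodes $x_{k}=\cos(k\pi/n)$. This forces at least $n-2$ roots, one in each interval $(x_{k+1},x_{k})\subset(-1,1)$; comparing the signs of $h(1)$ with $h(x_{1})=-(r^{2}+\beta-1)$ and of $h(-1)$ with $h(x_{n-1})=(-1)^{n-1}(r^{2}+\beta-1)$ then decides whether each of the two outer gaps $(x_{1},1)$ and $(-1,x_{n-1})$ contributes one further root. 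Since $\deg h=n$, the total is capped at $n$, which simultaneously shows that each interval contains exactly one simple root and that no roots can hide as complex pairs or as multiple roots; the count in $(-1,1)$ is therefore exactly $n$, $n-1$ or $n-2$ according to the two endpoint signs, which is the Lemma. With that step inserted your proof is complete and is a genuine (and arguably cleaner, because purely algebraic) alternative to the paper's argument, which instead analyzes the shape of $\varphi_{r,\beta}(\theta)=\frac{r^{2}-(\beta-1)}{r^{2}+\beta-1}\cot\theta+\frac{(2-\beta)r}{r^{2}+\beta-1}\frac{1}{\sin\theta}$ case by case (monotone versus unimodal) and counts its intersections with the $n$ branches of $-\cot(n\theta)$, using the same two endpoint limits to settle the boundary branches.
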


\section{\label{sub:Model-spaces-and}Model spaces and Model operators}

This section lays down the theoretical framework on which our results
are footed. We refer to \cite{NN3} for an in-depth study of the
topic. To the minimal polynomial $m=m_{T}$ of $T\in\cM_{n}$ we associate
a \emph{Blaschke product} 
\begin{align*}
B(z):=\prod_{i}\frac{z-\lambda_{i}}{1-\bar{\lambda}_{i}z}.
\end{align*}
The product is taken over all $i$ such that the corresponding linear
factor $z-\lambda_{i}$ occurs in the minimal polynomial $m$. Thus,
the numerator of $B$ as defined here is exactly the associated minimal
polynomial. The space of holomorphic functions on $\mathbb{D}$ is denoted
by $Hol(\mathbb{D})$. The Hardy spaces considered here are 
\begin{align*}   
H_{2}:=\big\{ f\in Hol(\mathbb{D}):\:\Norm{f}{H_{2}}^{2}:=\sup_{0\leq r<1}\frac{1}{2\pi}\int_{0}^{2\pi}\vert f(re^{i\phi})\vert^{2}{\rm d}\phi<\infty\big\},
\end{align*}
and 
\begin{align*}
 & H_{\infty}:=\big\{ f\in Hol(\mathbb{D}):\:\Norm{f}{H_{\infty}}:=\sup_{z\in\mathbb{D}}\vert f(z)\vert<\infty\big\}.
\end{align*}
Let $B$ be the Blaschke product associated to the minimal polynomial
of $T\in\cM_{n}$. We define the $\Abs{m}$-dimensional
\emph{model space} 
\begin{align*}
K_{B}:=H_{2}\ominus BH_{2}:=H_{2}\cap(BH_{2})^{\bot},
\end{align*}
where we employ the usual scalar product from $L^{2}(\partial\mathbb{D})$. The \emph{model operator} $M_{B}$ acts on $K_{B}$ as 
\begin{align*}
M_{B}:\: K_{B} & \rightarrow K_{B}\\
f & \mapsto M_{B}(f):=P_{B}(zf),
\end{align*}
where $P_{B}$ denotes the orthogonal projection on $K_{B}$. In other
words, $M_{B}$ is the compression of the multiplication operation
by $z$ to the model space $K_{B}$. Multiplication by $z$ has operator
norm $1$ so that $\Norm{M_{B}}{}\leq1$. Moreover, it is not hard
to show that the eigenvalues of $M_{B}$ are exactly the zeros of
the corresponding Blaschke product, see \cite{NN2}. 

For our subsequent discussion we will assume that $T$ can be diagonalized. This does
not result in any difficulties since the upper bounds obtained in
the special case extend by continuity to bounds for non-diagonalizable
$T$, see~\cite{OS}. One natural orthonormal basis for $K_{B}$ (for diagonalizable $T$) is the Malmquist-Walsh
basis $\{e_{k}\}_{k=1,...,\Abs{m}}$ with (\cite[p. 137]{NN3}) 
\begin{align*}
e_{k}(z):=\frac{(1-\vert\lambda_{k}\vert^{2})^{1/2}}{1-\bar{\lambda}_{k}z}\prod_{i=1}^{k-1}\frac{z-\lambda_{i}}{1-\bar{\lambda}_{i}z}
\end{align*}
with 
\begin{align*}
e_{1}(z)=\frac{(1-\vert\lambda_{1}\vert^{2})^{1/2}}{1-\bar{\lambda}_{1}z}.
\end{align*}

\section{\label{sec:Proofs}Proofs }

\subsection{Proofs of Theorem \ref{Th_upper_bound} and Proposition \ref{prop:unimod_spec}}

We recall~\cite[Theorem III.2]{OS} that given a finite Blaschke product
$B=\prod_{i=1}^{N}\frac{z-\nu_{i}}{1-\bar{\nu}_{i}z}$, of degree
$N\geq1$, with zeros $\nu_{i}\in\mathbb{D}$, the components of the resolvent
of the model operator $M_{B}$ at any point $\zeta\in\mathbb{C}-\{\nu_1,...,\nu_N\}$
with respect to the Malmquist-Walsh basis are given by
\begin{equation}
\left(\big(\zeta-M_{B}\big)^{-1}\right)_{1\leq i,\, j\leq N}=\frac{1}{B(\zeta)}\left\{ \begin{array}{ll}
0 & \mbox{if }i<j\\
\frac{1}{1-\overline{\nu_{i}}\zeta}\prod_{s\neq i}\frac{\zeta-\nu_{s}}{1-\overline{\nu_{s}}\zeta} & \mbox{if }i=j\\
\frac{(1-\vert\nu_{i}\vert^{2})^{1/2}}{1-\overline{\nu_{i}}\zeta}\frac{(1-\vert\nu_{j}\vert^{2})^{1/2}}{1-\overline{\nu_{j}}\zeta}\prod_{s\notin\{j,\,\dots i\}}\frac{\zeta-\nu_{s}}{1-\overline{\nu_{s}}\zeta} & \mbox{if }i>j
\end{array}.\right.\label{eq:OS_formula}
\end{equation}
We begin by proving Proposition~\ref{prop:unimod_spec}.

\begin{proof}[Proof of Proposition~\ref{prop:unimod_spec}] We consider $T\in\cC_n$ and assume for a moment that $\sigma(T)\subset\mathbb{D}$. It follows from \cite[Theorem III.2]{OS} that for any $\zeta\in\mathbb{C}-\sigma(T)$
the resolvent of $T$ is bounded by
\begin{equation*}
\Norm{R(\zeta,\, T)}{}\leq\Norm{R(\zeta,\, M_{B})}{},
\end{equation*}
where $B$ is the Blaschke product associated with $m_T$. We conclude from \eqref{eq:OS_formula} that
\[
\left(\big(\zeta-M_{B}\big)^{-1}\right)_{1\leq i,\, j\leq\Abs{m}}=\left\{ \begin{array}{ll}
0 & \mbox{if }i<j\\
\frac{1}{\zeta-\lambda_{i}} & \mbox{if }i=j\\
\frac{(1-\Abs{\lambda_{i}}^{2})^{1/2}}{1-\overline{\lambda_{i}}\zeta}\frac{(1-\Abs{\lambda_{j}}^{2})^{1/2}}{1-\overline{\lambda_{j}}\zeta}{\prod_{k=j}^{i}\frac{1-\bar{\lambda}_k\zeta}{\zeta-\lambda_k}} & \mbox{if }i>j
\end{array}.\right.
\]
%
%
Taking the limit $\abs{\lambda_i}\rightarrow1$ we have that for $\zeta\in\mathbb{C}-\sigma(T)$ the norm of the resolvent is bounded by the norm of the diagonal matrix
\[
\left(\begin{array}{ccccc}
\frac{1}{\zeta-\lambda_{1}}\\
  & \ddots\\
 &  &  \frac{1}{\zeta-\lambda_{n}}
\end{array}\right),
\]
which is $\frac{1}{d(\zeta,\,\sigma)}$.
\end{proof}
\begin{proof}[Proof of Theorem \ref{Th_upper_bound}] 
We begin by showing that \eqref{eq:gen_upp_bd} holds.
As before we suppose $\sigma(T)\subset\mathbb{D}$. The more general case $\sigma(T)\subset\bar{\mathbb{D}}$ follows immediately by continuously moving the eigenvalue towards $\partial\mathbb{D}$. As before we have that  
$
\Norm{R(\zeta,\,T)}{}\leq\Norm{R(\zeta,\,M_{B})}{}
$
and that
\[
\left(\big(\zeta-M_{B}\big)^{-1}\right)_{1\leq i,\, j\leq\Abs{m}}=\left\{ \begin{array}{ll}
0 & \mbox{if }i<j\\
\frac{1}{1-\overline{\lambda_{i}}\zeta}\frac{1}{b_{\lambda_{i}}} & \mbox{if }i=j\\
\frac{(1-\Abs{\lambda_{i}}^{2})^{1/2}}{1-\overline{\lambda_{i}}\zeta}\frac{(1-\vert\lambda_{j}\vert^{2})^{1/2}}{1-\overline{\lambda_{j}}\zeta}\frac{1}{\prod_{k=j}^{i}b_{\lambda_{k}}} & \mbox{if }i>j
\end{array},\right.
\]
where we abbreviated
$b_{\lambda_{i}}=\frac{\zeta-\lambda_{i}}{1-\overline{\lambda_{i}}\zeta}$. Recall that for any $n\times n$ matrices $A=(a_{ij})$ and $A=(a'_{ij})$,
the condition $\vert a_{ij}\vert\leq a'_{ij}$ implies that $\Norm{A}{}\leq\Norm{A'}{}$.
Hence, we estimate
\[
\Abs{\big(\zeta-M_{B}\big)_{i,\, j}^{-1}}\leq\left\{ \begin{array}{ll}
0 & \mbox{if }i<j\\
\frac{1}{\vert1-\overline{\lambda_{i}}\zeta\vert}\frac{1}{\vert b_{\lambda_{i}}\vert} & \mbox{if }i=j\\
\frac{(1-\vert b_{\lambda_{i}}\vert^{2})^{\frac{1}{2}}(1-\vert  b_{\lambda_{j}}\vert^{2})^{\frac{1}{2}}}{1-\Abs{\zeta}^{2}}\frac{1}{\prod_{k=j}^{i}\vert b_{\lambda_{k}}\vert} & \mbox{if }i>j
\end{array},\right.
\]
where we made use of the well-known~\cite{JG} fact that
\[
\frac{1-\Abs{\lambda}^{2}}{\vert1-\bar{\lambda}\zeta\vert^{2}}=\frac{1-\vert b_{\lambda}(\zeta)\vert^{2}}{1-\Abs{\zeta}^{2}},\qquad\lambda\neq\zeta.
\]
By assumption $\min_{i}\vert b_{\lambda_{i}}\vert=p(\zeta,\,\sigma)=r$ and for any
$l$ with $\vert b_{\lambda_{l}}\vert=r$ we have

\[
\Abs{\big(\zeta-M_{B}\big)_{i,\, j}^{-1}}\leq\left\{ \begin{array}{ll}
0 & \mbox{if }i<j\\
\frac{1}{\vert1-\overline{\lambda_{i}}\zeta\vert}\frac{1}{r} & \mbox{if }i=j\\
\frac{1-\vert b_{\lambda_{l}}(\zeta)\vert^{2}}{1-\Abs{\zeta}^{2}}\frac{1}{r^{i-j+1}} & \mbox{if }i>j
\end{array},\right.
\]
and since 
\[
\frac{1-\vert b_{\lambda_{l}}(\zeta)\vert^{2}}{1-\Abs{\zeta}^{2}}=\frac{1-\vert\lambda_{l}\vert^{2}}{\vert1-\overline{\lambda_{l}}\zeta\vert^{2}}\leq\frac{1}{d(1,\bar{\sigma}\zeta)}\frac{1-\vert\lambda_{l}\vert^{2}}{\vert1-\overline{\lambda_{l}}\zeta\vert}\leq\frac{\beta}{d(1,\bar{\sigma}\zeta)},
\]
with $\beta=s(\zeta,\,\sigma)$ we conclude that

\[
\Abs{\big(\zeta-M_{B}\big)_{i,\, j}^{-1}}\leq\frac{1}{d(1,\bar{\sigma}\zeta)}\frac{1}{r^{\Abs{m}}}\left\{ \begin{array}{ll}
0 & \mbox{if }i<j\\
r^{\Abs{m}-1} & \mbox{if }i=j\\
\beta r^{\Abs{m}-(i-j+1)} & \mbox{if }i>j
\end{array}.\right.
\]
Next we discuss the sharpness of \eqref{eq:gen_upp_bd}. We consider $\zeta\in[-1,1]$ and the model operator $M_B$ with $B=b_{\lambda}^{n}$ and $\lambda\in(-1,\,1)$. In this case $d(1,\bar{\sigma}\zeta)=\abs{1-\bar\lambda\zeta}$ and $p(\zeta,\,\sigma)=\abs{b_{\lambda}({\zeta})}$
and $s({\zeta},\,\sigma)=\frac{1-\Abs{\lambda}^{2}}{\abs{1-\bar{\lambda}{\zeta}}}$. We find that
\[
\big({\zeta}-M_{B}\big)^{-1}_{i,\, j}=\frac{1}{d(1,\bar{\sigma}\zeta)}\frac{1}{B({\zeta})}\left\{ \begin{array}{ll}
0 & \mbox{if }i<j\\
(b_{\lambda}({\zeta}))^{n-1} & \mbox{if }i=j\\
\frac{{1-\abs{\lambda}^{2}}}{\abs{1-\bar{\lambda}{\zeta}}}(b_{\lambda}({\zeta}))^{n-(i-j+1)} & \mbox{if }i>j
\end{array}\right..
\]
If $b_{\lambda}({\zeta})>0$ then the equality%
\[
\Norm{R({\zeta},M_{B})}{}=\frac{1}{d(1,\bar{\sigma}\zeta)}\frac{1}{r^{n}}\Norm{X_{r,\,\beta}}{}
\]
is clear. If $b_{\lambda}({\zeta})<0$ equality
still holds because the Toeplitz matrices $a=(a_{k})_{k\geq0}$
and $b=((-1)^{k}a_{k})_{k\geq0}$ have the same norm. To see this write $D=diag(-1,...,-1)$ and note that $DT_{a}D^\dagger=T_{b}$. As $D$ is unitary $\Norm{T_{a}}{}=\Norm{T_{b}}{}$. \end{proof}

\subsection{Proofs of Theorem \ref{Kronecker_type_theorem} and Propositions \ref{cor:Davies_Simon_type_th}, \ref{Th_3_computation_of_norm_of_Xtilde}}

\begin{lem} \label{lemma_real_zeta}For
any $\zeta\in\mathbb{D}$ and $r\in(0,\,1)$
\[
\cR_{n}(\zeta,\, r)=\cR_{n}(\Abs{\zeta},\, r).
\]
\end{lem}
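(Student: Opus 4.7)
The plan is to exploit a simple rotation symmetry of the supremum: writing $\zeta = |\zeta|e^{i\theta}$ with $\theta\in[0,2\pi)$, I expect that conjugation by the phase, i.e.\ the substitution $T\mapsto T' = e^{-i\theta}T$, gives a bijection of the constraint set onto itself while preserving the functional $d(1,\bar\sigma\zeta)\,\Norm{R(\zeta,T)}{}$. Once this is established, both supremums are over the same set of values, hence equal.

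First I would observe that $T'=e^{-i\theta}T$ is again in $\cC_n$ (multiplication by a unimodular scalar is an isometry for the spectral norm) and has spectrum $\sigma' = e^{-i\theta}\sigma$. Then I would verify the three identities
\begin{align*}
\Norm{R(|\zeta|,T')}{} &= \Norm{R(\zeta,T)}{},\\
p(|\zeta|,\sigma') &= p(\zeta,\sigma),\\
d(1,\bar{\sigma'}\,|\zeta|) &= d(1,\bar\sigma\,\zeta).
\end{align*}
The first follows from factoring $|\zeta|I - e^{-i\theta}T = e^{-i\theta}(\zeta I - T)$, giving $R(|\zeta|,T') = e^{i\theta}R(\zeta,T)$. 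The second is immediate by pulling out $|e^{-i\theta}|=1$ in numerator and denominator of the pseudo-hyperbolic formula, using $e^{i\theta}|\zeta|=\zeta$. The third reduces to the same identity termwise, since $\overline{e^{-i\theta}\lambda}\cdot|\zeta| = \bar\lambda\,e^{i\theta}|\zeta| = \bar\lambda\zeta$.

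With these identities in hand, the map $T\mapsto e^{-i\theta}T$ sends the set $\{T\in\cC_n:\;p(\zeta,\sigma(T))=r\}$ bijectively onto $\{T'\in\cC_n:\;p(|\zeta|,\sigma(T'))=r\}$ (its inverse is multiplication by $e^{i\theta}$) and carries $d(1,\bar\sigma\zeta)\Norm{R(\zeta,T)}{}$ to $d(1,\bar{\sigma'}|\zeta|)\Norm{R(|\zeta|,T')}{}$ value for value. Taking suprema yields $\cR_n(\zeta,r) = \cR_n(|\zeta|,r)$.

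No real obstacle is expected; the only point to check carefully is the bookkeeping of conjugates in the factor $d(1,\bar\sigma\zeta)$, since it is this term (rather than a more symmetric quantity like $d(\zeta,\sigma)$) that makes the reduction nontrivial — one has to notice that $\overline{e^{-i\theta}\lambda}\cdot|\zeta|$ collapses back to $\bar\lambda\zeta$ precisely because $e^{i\theta}|\zeta|=\zeta$, which is the same algebraic coincidence that makes the resolvent norm invariant.
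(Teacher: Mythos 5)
Your proposal is correct and follows essentially the same route as the paper: both use the substitution $T\mapsto T'=e^{-i\theta}T$, check that it preserves membership in $\cC_{n}$, the pseudo-hyperbolic distance constraint, the resolvent norm, and the factor $d(1,\bar{\sigma}\zeta)$, and then conclude by taking suprema. Your write-up is in fact slightly more explicit than the paper's about the conjugate bookkeeping in $d(1,\bar{\sigma}\zeta)$, but the argument is the same.
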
 \begin{proof} Indeed, writing $\zeta=\Abs{\zeta}\exp(i\theta)$ we have $
\Abs{1-\bar{\lambda}\zeta}=\Abs{1-\exp(i\theta)\bar{\lambda}\Abs{\zeta}}$
and $\Norm{R(\zeta,\,T)}{}=\Norm{(\abs{\zeta}-\exp(-i\theta)T)^{-1}}{}$,
which entails that
\[
d(1,\,\overline{\sigma(T)}\zeta)\Norm{R(\zeta,\,T)}{}=d(1,\,\overline{\sigma(T')}\Abs{\zeta})\Norm{R(\Abs{\zeta},T')}{}
\]
with $T'=\exp(-i\theta)T$. Clearly $T\in\cC_{n}\Leftrightarrow T'\in\cC_{n}$ and $p(\zeta,\,\sigma(T))=p(\Abs{\zeta},\,\sigma(T'))$. The same argument implies $T\in\cT_{n}^a\Leftrightarrow T'\in\cT_{n}^a$
\end{proof}
The lemma says that for our optimization task it is sufficient to consider real $\zeta$.
\begin{proof}[Proof of Theorem~\ref{Kronecker_type_theorem}]
We consider a matrix $T\in\cC_{n}$ with $p(\zeta,\,\sigma(T))=r$ and the function $s$ in (\ref{eq:gen_upp_bd}). The function
$\lambda\mapsto\frac{1-\Abs{\lambda}^{2}}{\abs{1-\bar{\lambda}\abs{\zeta}}}$
is maximized on the pseudo-hyperbolic circle of center $\Abs{\zeta}$
and radius $r$ at $\lambda_{\textnormal{max}}=\frac{\Abs{\zeta}-r}{1-r\Abs{\zeta}}$ and takes the value $\beta_{{\rm max}}=\frac{1-\Abs{\lambda_{\max}}^{2}}{1-\bar{\lambda_{\max}}\Abs{\zeta}}=\frac{1-r^{2}}{1-r\Abs{\zeta}}$. Thus using Lemma~\ref{lemma_real_zeta} and Theorem~\ref{Th_upper_bound} we find that $\cR_{n}(\zeta,\, r)=\cR_{n}(\abs{\zeta},\, r)\leq\frac{1}{r^{n}}\Norm{X_{r,\,\beta_{{\rm max}}}}{}$.

According to the proof of Theorem~\ref{Th_upper_bound} the model operator corresponding to Blaschke product $B=b_{\lambda}^{n}$ with $\lambda\in(-1,1)$, $\lambda\neq\abs{\zeta}$ satisfies
\[
\Norm{R({\abs\zeta},M_{B})}{}=\frac{1}{d(1,\bar{\sigma}\abs\zeta)}\frac{1}{r^{n}}\Norm{X_{r,\,\beta}}{}.
\]
We choose $\lambda_{\max}=\frac{\Abs{\zeta}-r}{1-r\Abs{\zeta}}$ and
have $p(\Abs{\zeta},\,\sigma)=r$
and $s(\Abs{\zeta},\,\sigma)=\beta_{{\rm max}}=\frac{1-r^{2}}{1-r\Abs{\zeta}}$.
It follows that ${\cR_{n}}(\zeta,\, r)\geq\frac{1}{r^{n}}\Norm{X_{r,\,\beta}}{}$ and the \lq\lq worst\rq\rq{} matrix for the extremal problem in~\eqref{eq:Kr_Cst_1} is the analytic Toeplitz matrix $M_B$ with $B=b_{\lambda_{\max}}^{n}$.

The monotonicity properties and the asymptotic behaviour of $\cR_{n}(\zeta,\, r)$ are consequences of our study of $\Norm{X_{r,\,\beta_{{\rm max}}}}{}$, which is summarized in Proposition~\ref{Th_3_computation_of_norm_of_Xtilde}.
\end{proof}
\begin{proof}[Proof of Proposition \ref{cor:Davies_Simon_type_th}]
 We first prove \eqref{eq:th4_majoration}. We consider $T\in\cC_{n}$
with spectrum $\sigma$ and assume for a moment that $\sigma_{2}\subset\mathbb{D}$.
It follows from \cite[Theorem III.2]{OS} that for any $\zeta\in\mathbb{C}-\sigma$
the resolvent of $T$ is bounded by 
\[
\Norm{R(\zeta,\, T)}{}\leq\Norm{R(\zeta,\, M_{B})}{},
\]
where $B$ is the Blaschke product associated with $P_{\sigma}$.
We conclude from \eqref{eq:OS_formula} that the matrix of the resolvent
of the model operator $M_{B}$ at any point $\zeta\in\mathbb{C}-\sigma$
(with respect to the Malmquist-Walsh basis) is given by a block-diagonal
matrix
\[
\left(\begin{array}{cc}
A_{1} & 0\\
0 & A_{2}
\end{array}\right),
\]
where $A_{1}\in\cM_{\abs{P_{\sigma_{1}}}}(\mathbb{C})$ and $A_{2}\in\cM_{\abs{P_{\sigma_{2}}}}(\mathbb{C})$
are respectly defined by (taking now the limit as $\abs{\mu}\rightarrow1$
for $\mu\in\sigma_{2}$):
\[
(A_{1})_{1\leq i,\, j\leq\Abs{P_{\sigma_{1}}}}=\left\{ \begin{array}{ll}
0 & \mbox{if }i<j\\
\frac{1}{\zeta-\lambda_{i}} & \mbox{if }i=j\\
\frac{(1-\Abs{\lambda_{i}}^{2})^{1/2}}{1-\overline{\lambda_{i}}\zeta}\frac{(1-\Abs{\lambda_{j}}^{2})^{1/2}}{1-\overline{\lambda_{j}}\zeta}{\prod_{k=j}^{i}\frac{1-\bar{\lambda}_{k}\zeta}{\zeta-\lambda_{k}}} & \mbox{if }i>j
\end{array},\right.
\]
and
\[
A_{2}=\left(\begin{array}{ccc}
\frac{1}{\zeta-\mu_{1}}\\
 & \ddots\\
 &  & \frac{1}{\zeta-\mu_{n_{2}}}
\end{array}\right),
\]
where $\mu_{i}$ are the unimodular eigenvalues of $M_{B}$. To conclude,
it remains to remark that the spectral norm of $R(\zeta,\, T)$ is
nothing but 
\[
\max(\Norm{A_{1}}{},\,\Norm{A_{2}}{}),
\]
that $\Norm{A_{2}}{}=\frac{1}{d(\zeta,\,\sigma_{2})},$ while the
inequality 
\[
\Norm{A_{1}}{}\leq\frac{1}{d(\zeta,\,\sigma_{1})}\Norm{X_{1,\, s(\zeta,\sigma_{1})}}{},
\]
follows directly from Theorem \ref{Th_upper_bound}. This proves the
first inequality in \eqref{eq:th4_majoration}. The second one is
due to the fact that $s(\zeta,\sigma_{1})$ is bounded from above
by 2 and the equality refers to the resolvent estimate in \cite{DS}
or more generally to our Proposition~\ref{Th_3_computation_of_norm_of_Xtilde}. 

Now, let us prove \eqref{eq:th4_egalite}. We consider $T\in\cC_{n}$
with spectrum $\sigma=\sigma_{1}\cup\sigma_{2},$ $n_{1}=\abs{P_{\sigma_{1}}}$
and $n_{2}=\abs{P_{\sigma_{2}}}$ such that $\max_{\lambda\in\sigma_{1}}\abs{\lambda}\leq\rho_1.$
We fix $\zeta\in\partial\mathbb{D}-\sigma.$ The last inequality,
combined with the trivial observation that $s(\zeta,\sigma_{1})\leq\rho_1,$
proves that 
\[
\Norm{A_{1}}{}\leq\Norm{X_{1,\,\rho_1}}{}.
\]
To complete the proof of \eqref{eq:th4_egalite}, we first remark
that by rotation invariance (as in the proof of Lemma \ref{lemma_real_zeta})
the $\sup$ defined in the statement of \eqref{eq:th4_egalite} does
not depend on $\zeta$ and coincides with its value at point $\zeta=1.$
We consider the spectrum $\sigma=\sigma_{1}\cup\sigma_{2}$ defined
by setting $P_{\sigma_{1}}=(z-\rho_1)^{n_{1}}$ and $P_{\sigma_{2}}=(z+1)^{n_{2}}.$
Let $B$ be the Blaschke product associated with $P_{\sigma}.$ We
repeat the proof of \eqref{eq:th4_majoration} to the model operator
$M_{B}$. The matrix of its resolvent at point $1$ (with respect
to the Malmquist-Walsh basis) is given by the block-diagonal matrix
\[
\left(\begin{array}{cc}
A_{1} & 0\\
0 & A_{2}
\end{array}\right),
\]
where

\begin{align*}
(A_{1})_{1\leq i,\, j\leq n_{1}}&=\frac{1}{d(1,\,\sigma_{1})}\frac{1}{B({1})}\left\{ \begin{array}{ll}
0 & \mbox{if }i<j\\
(b_{\rho_1}({1}))^{n-1} & \mbox{if }i=j\\
\frac{{1-\abs{\rho_1}^{2}}}{\abs{1-\rho_1}}(b_{\rho_1}({1}))^{n-(i-j+1)} & \mbox{if }i>j
\end{array}\right.\\
&=\frac{X_{1,\,1+\rho_1}}{\abs{1-\rho_1}}.
\end{align*}
and $A_{2}=\frac{1}{2}I_{n_{2}},$ where $I_{n_{2}}$ is the identity
matrix of $\cM_{n_{2}}(\mathbb{C})$. Thus, 
\[
\Norm{R(1,\, M_{B})}{}=\max(\frac{1}{\abs{1-\rho_1}}\Norm{X_{1,\,1+\rho_1}}{},\,\frac{1}{2}),
\]
but $d(1,\,\sigma)=d(1,\,\sigma_{1})=\abs{1-\rho_1},$ and as a consequence:
\[
d(1,\,\sigma)\Norm{R(1,\, M_{B})}{}=\max(\Norm{X_{1,\,1+\rho_1}}{},\,\frac{\abs{1-\rho_1}}{2}).
\]
The above maximum is equal to $\Norm{X_{1,\,1+\rho_1}}{}:$ indeed,
according to Proposition~\ref{Th_3_computation_of_norm_of_Xtilde}, 
\[
\Norm{X_{1,\,1+\rho_1}}{}\geq\Norm{X_{1,\,1}}{}=\frac{1}{2\sin(\frac{\pi}{4n_{1}+2})},
\]
and the latter quantity $\frac{1}{2\sin(\frac{\pi}{4n_{1}+2})}$ is
always greater than $\frac{1}{2}$ (increasing with $n_{1})$ since
$n_{1}\geq1$. The last part of the statement relies again on Proposition~\ref{Th_3_computation_of_norm_of_Xtilde} and the fact that $\rho\mapsto\Norm{X_{1,\,1+\rho}}{}$
is increasing on $[0,\,1].$ 
\end{proof}
\begin{proof}[Proof of Proposition~\ref{Th_3_computation_of_norm_of_Xtilde}]

Our approach is guided by the methods developed in~\cite{EE,DS,OS}. Often it will be convenient to transform the Toeplitz matrix $X_{r,\,\beta}$ to an associated Hankel matrix $\widetilde{X}_{r,\,\beta}$, see~\cite{DS,OS}.
We set $\widetilde{X}_{r,\,\beta}=X_{r,\,\beta}J$, where
\begin{equation*}
J=\left(\begin{array}{ccc}
 &  & 1\\
 & \iddots    & \\
1 &    &\\
\end{array}\right).
\end{equation*}
$\tilde{X}$ is a Hermitian Hankel matrix that satisfies $\norm{\widetilde{X}}{}=\Norm{X_{r,\,\beta}}{}$. Hence, in our discussion we can focus on the largest in magnitude eigenvalue of $\tilde{X}$. (We omit indices in $\widetilde{X}$ for simpler notation.)

\emph{i)} In the case $\beta=1-r^2$ we have
\[
\frac{1}{r^{n}}X_{r,\,\beta}=\frac{1}{r^{n}}\left\{ \begin{array}{ll}
0 & \mbox{if }i<j\\
r{}^{n-1} & \mbox{if }i=j\\
(1-r^{2})r{}^{n-(i-j+1)} & \mbox{if }i>j
\end{array}\right.=\big(M_{b_{-r}^{n}}^{-1}\big)_{1\leq i,\, j\leq n},
\]
where the second equality is~\eqref{eq:OS_formula}. It is well known~\cite[Theorem 3.12, p.147]{NN1},~\cite[Lemma III.5]{OS} that (see the cited articles for a discussion)
\[
\Norm{M_{B}^{-1}}{}=\Norm{1/z}{H_{\infty}/BH_{\infty}}=\frac{1}{\vert B(0)\vert}=\prod_{\lambda\in\sigma}\frac{1}{\Abs{\lambda}}.
\]
In particular, we find
$\Norm{M_{b_{-r}^{n}}^{-1}}{}=\frac{1}{r^{n}}$ and $\Norm{X_{r,\,\beta}}{}=1$.

\emph{ii)} $\tilde{X}$ is an entry-wise non-negative matrix. A standard result in Perron-Froebenius theory~\cite[Theorem 4.2]{HM} asserts that the spectral radius of such matrices is a non-negative eigenvalue and the corresponding eigenvector is entry-wise non-negative\footnote{In fact, as can be seen easily by computing powers, $\tilde{X}$ is primitive.}. Furthermore the spectral radius is monotonically increasing in the matrix-entries~\cite[Corollary 2.1]{HM}, i.e. $\tilde{X}\leq B$ entry-wise implies $\rho(\tilde{X})\leq\rho(B)$.

\emph{iii)} Here, we show a simple proof for $\beta>1$. The more general discussion under the condition $1-r^2\leq \beta$ can be found below. Let $\tilde{X}_n\in\cM_n$ and
$\tilde{X}_{n+1}\in\cM_{n+1}$. We show $\norm{\tilde{X}_n}{}\leq\norm{\tilde{X}_{n+1}}{}$. Let $x^*$ with $\norm{x^*}{}=1$ and $\tilde{X}_nx^*=\norm{\tilde{X}_n}{}x^*$. Computing entries of vectors and using $\beta\geq1$ we see that
$$\Norm{\tilde{X}_{n}x^*}{}\leq\Norm{\tilde{X}_{n+1}\begin{pmatrix}x^*\\0\end{pmatrix}}{}.$$

\emph{iv)} We turn to the case $r^{2}+\beta-1\neq0$. The discussion is guided by~\cite{EE}. The eigenvalues of $\tilde{X}^{2}$ are the eigenvalues of $\tilde{X}$
squared. Hence, we are looking for the largest $\lambda^{2}$ such
that
\[
0=\det(\widetilde{X}^{2}-\lambda^{2})=
\det(\widetilde{X}-\lambda)\det(\widetilde{X}+\lambda).
\]
The matrix $\widetilde{X}^{2}-\lambda^{2}$ can be computed explicitly.
We display this matrix for $n=5$ for simplicity, see Appendix~\ref{sec:appendix-1} for the general matrix. If $n=5$ we have 
{\scriptsize{
\begin{align*}
&\textnormal{\normalsize{$\tilde{X}^{2}-\lambda^{2}$}}=\\
&\begin{pmatrix}-\lambda^{2}+\beta^{2}\frac{1-r^{2\cdot4}}{1-r^{2}}+r^{8} & \beta^{2}r\frac{1-r^{2\cdot3}}{1-r^{2}}+\beta r^{7} & \beta^{2}r^{2}\frac{1-r^{2\cdot2}}{1-r^{2}}+\beta r^{6} & \beta^{2}r^{3}\frac{1-r^{2\cdot1}}{1-r^{2}}+\beta r^{5} & \beta r^{4}\\
\beta^{2}r\frac{1-r^{2\cdot3}}{1-r^{2}}+\beta r^{7} & -\lambda^{2}+\beta^{2}r^{2}\frac{1-r^{2\cdot3}}{1-r^{2}}+r^{8} & \beta^{2}r^{3}\frac{1-r^{2\cdot2}}{1-r^{2}}+\beta r^{7} & \beta^{2}r^{4}\frac{1-r^{2\cdot1}}{1-r^{2}}+\beta r^{6} & \beta r^{5}\\
\beta^{2}r^{2}\frac{1-r^{2\cdot2}}{1-r^{2}}+\beta r^{6} & \beta^{2}r^{3}\frac{1-r^{2\cdot2}}{1-r^{2}}+\beta r^{7} & -\lambda^{2}+\beta^{2}r^{4}\frac{1-r^{2\cdot2}}{1-r^{2}}+r^{8} & \beta^{2}r^{5}\frac{1-r^{2\cdot1}}{1-r^{2}}+\beta r^{7} & \beta r^{6}\\
\beta^{2}r^{3}\frac{1-r^{2\cdot1}}{1-r^{2}}+\beta r^{5} & \beta^{2}r^{4}\frac{1-r^{2\cdot1}}{1-r^{2}}+\beta r^{6} & \beta^{2}r^{5}\frac{1-r^{2.1}}{1-r^{2}}+\beta r^{7} & -\lambda^{2}+\beta^{2}r^{6}\frac{1-r^{2\cdot1}}{1-r^{2}}+r^{8} & \beta r^{7}\\
\beta r^{4} & \beta r^{5} & \beta r^{6} & \beta r^{7} & -\lambda^{2}+r^{8}
\end{pmatrix}.
\end{align*}
}}%
The determinant of the above matrix is transformed by the following
steps.
\begin{enumerate}
\item We divide the $k^{th}$ column and the $k^{th}$ row of $(\widetilde{X}^{2}-\lambda^{2})$
by $r^{k-1}$ .

\item In the determinant resulting from (1), we subtract the $k^{th}$ column
from the $(k-1)^{th}$ and leave the $n^{th}$ unchanged.

\item In the determinant resulting from (2), we subtract the $(k-1)^{th}$ row
from the $k^{th}$ and leave the $n^{th}$ unchanged. We obtain
the determinant of a tri-diagonal $n\times n$ matrix.

\item In the determinant resulting from (3), we multiply the $k^{th}$ row
by ${r^{2k-1}}$. 
\end{enumerate}
We obtain that $\det(\tilde{X}^{2}-\lambda^{2})=0$ iff
\[
0=\det\left(\begin{array}{ccccc}
\gamma & \alpha' & 0 & \cdots & 0\\
\alpha & \gamma & \alpha' & \ddots & \vdots\\
0 & \alpha & \ddots & \ddots & 0\\
\vdots & \ddots & \ddots & \gamma & \alpha'\\
0 & \cdots & 0 & \alpha & y
\end{array}\right)=\det\left(\begin{array}{ccccc}
\gamma & \mu & 0 & \cdots & 0\\
\mu & \gamma & \mu & \ddots & \vdots\\
0 & \mu & \ddots & \ddots & 0\\
\vdots & \ddots & \ddots & \gamma & \mu\\
0 & \cdots & 0 & \mu & y
\end{array}\right),
\]
where $\alpha=\lambda^2r+r^{2n-1}(\beta-1),\ \alpha'=\frac{\alpha}{r^{2}},y=-r\lambda^2+r^{2n-1}$
and $\gamma=-\lambda^2(r+1/r)+r^{2n-2}(r+\frac{1}{r}(\beta-1)^{2})$ and we set $\mu=\alpha/r$ in the last step.
We have that 
\begin{multline*}
\det\left(\begin{array}{ccccc}
\gamma & \mu & 0 & \cdots & 0\\
\mu & \gamma & \mu & \ddots & \vdots\\
0 & \mu & \ddots & \ddots & 0\\
\vdots & \ddots & \ddots & \gamma & \mu\\
0 & \cdots & 0 & \mu & y
\end{array}\right)\\
=\det\left(\begin{array}{ccccc}
\gamma & \mu & 0 & \cdots & 0\\
\mu & \gamma & \mu & \ddots & \vdots\\
0 & \mu & \ddots & \ddots & 0\\
\vdots & \ddots & \ddots & \gamma & \mu\\
0 & \cdots & 0 & \mu & \gamma
\end{array}\right)+\det\left(\begin{array}{ccccc}
\gamma & \mu & 0 & \cdots & 0\\
\mu & \gamma & \mu & \ddots & \vdots\\
0 & \mu & \ddots & \ddots & 0\\
\vdots & \ddots & \ddots & \gamma & 0\\
0 & \cdots & 0 & \mu & y-\gamma
\end{array}\right)\\
=\det\left(\begin{array}{ccccc}
\gamma & \mu & 0 & \cdots & 0\\
\mu & \gamma & \mu & \ddots & \vdots\\
0 & \mu & \ddots & \ddots & 0\\
\vdots & \ddots & \ddots & \gamma & \mu\\
0 & \cdots & 0 & \mu & \gamma
\end{array}\right)+(y-\gamma)\det\left(\begin{array}{ccccc}
\gamma & \mu & 0 & \cdots & 0\\
\mu & \gamma & \mu & \ddots & \vdots\\
0 & \mu & \ddots & \ddots & 0\\
\vdots & \ddots & \ddots & \gamma & \mu\\
0 & \cdots & 0 & \mu & \gamma
\end{array}\right),
\end{multline*}
where in the last sum the first determinant is of order $n$, while
the second is of order $n-1$. The assumption $\mu=0$ is equivalent to
$\lambda^{2}=r^{2n-2}(1-\beta)$ and the determinant equation turns into $\gamma y=0$. In this case we obtain a solution if $\beta=0$ or $r^{2}+\beta-1=0$ but these choices of $\beta$ are treated differently. (Note that $\lambda=1$ implies $\mu\neq0$.) For the determinant of a tri-diagonal Toeplitz matrix we have the well known formula~\cite{MV,EP,EE,OS}
\[
\det\left(\begin{array}{ccccc}
a & 1 & 0 & \cdots & 0\\
1 & a & 1 & \ddots & \vdots\\
0 & 1 & \ddots & \ddots & 0\\
\vdots & \ddots & \ddots & a & 1\\
0 & \cdots & 0 & 1 & a
\end{array}\right)=U_n(a/2),
\]
where $U_n$ denotes the $n$-th degree Chebychev polynomial of the second kind. Hence the eigenvalues of $\tilde{X}$ correspond to the roots of the polynomial equation
$$\mu^{n+1} U_n\left(\frac{\gamma}{2\mu}\right)+\mu^n(y-\gamma)U_{n-1}\left(\frac{\gamma}{2\mu}\right)=0.$$

\emph{v)} We make use of properties of $U_n$ to reveal properties of the roots of equation~\eqref{eq:rrr}. Let us consider $\lambda^2$ so that there is \emph{real} $\theta$ with $\gamma/2\mu=\cos(\theta)$, i.e.
\begin{equation}
\lambda^{2}=r^{2(n-1)}\frac{(\beta-1)^{2}+r^{2}-2r(\beta-1)
\cos(\theta)}{1+r^{2}+2r\cos(\theta)}.\label{equ:222}
\end{equation}
%
%
%
%
Using $U_n(\cos(\theta))=\frac{\sin{((n+1)\theta)}}{\sin{(\theta)}}$ Equation~\eqref{eq:rrr} turns into
\begin{equation*}
\frac{\sin((n+1)\theta)}{\sin(\theta)}+\frac{1}{r}\frac{\lambda^{2}-r^{2n-2}(\beta-1)^{2}}{\lambda^{2}+r^{2n-2}(\beta-1)}\frac{\sin(n\theta)}{\sin(\theta)}=0,
\end{equation*}
which plugging back $\lambda^{2}$ is
\begin{equation}
{\rm \cot}(n\theta)+\frac{1}{\tan(\theta)}\frac{r^{2}-(\beta-1)}{r^{2}+\beta-1}+\frac{(2-\beta)r}{r^{2}+\beta-1}\frac{1}{\sin(\theta)}=0.\label{eq:theta_equation_1-1}
\end{equation}
Similarly, if
$\gamma/2\mu\in[1,\infty)$ we set $\gamma/2\mu=\cosh(\theta)$ with $\theta\in[0,\infty)$, which leads to \cite[Section 1.4]{MH}
\begin{equation}
\lambda^{2}=r^{2(n-1)}
\frac{(\beta-1)^{2}+r^{2}-2r(\beta-1)\cosh(\theta)}{1+r^{2}+2r\cosh(\theta)}\label{equ:2221}
\end{equation}
with
\begin{equation}
\frac{\sinh{((n+1)\theta)}}{\sinh{(n\theta)}}=-\frac{(2-\beta)r+2(1-\beta)\cosh(\theta)}{r^2+\beta-1}.
\label{equ:2221b}
\end{equation}
Finally, in case that $\gamma/2\mu\in[-1,-\infty)$ we set $\gamma/2\mu=-\cosh(\theta)$ with $\theta\in[0,\infty)$ and recall that $U_n(-x)=(-1)^nU_n(x)$ \cite[Section 1.4]{MH} to find
\begin{equation}
\lambda^{2}=r^{2(n-1)}\frac{(\beta-1)^{2}+r^{2}+
2r(\beta-1)\cosh(\theta)}{1+r^{2}-2r\cosh(\theta)}\label{equ:2222}
\end{equation}
with
\begin{equation}
\frac{\sinh{((n+1)\theta)}}{\sinh{(n\theta)}}=\frac{(2-\beta)r-2(1-\beta)\cosh(\theta)}{r^2+\beta-1}.\label{equ:2222b}
\end{equation}

We consider the first case. Any solution $\theta_k$ of~\eqref{eq:theta_equation_1-1} leads to a solution $\lambda^2_k$ of $\det(\tilde{X}^{2}-\lambda^{2})=0$ via~\eqref{equ:222}. An important point about Equation~\eqref{equ:222} is that all such solutions $\abs{\lambda_k}^2$ can be bounded by a function that decays exponentially with $n$. This allows us to study properties of $\norm{\tilde{X}}{}$ by counting solutions of equation~\eqref{eq:theta_equation_1-1}. We observe that 
\begin{align*}&\frac{1}{\sin(\theta)}\to+\infty\quad\textnormal{as}\quad\theta\to0^+ &\textnormal{and}\qquad
\quad&\frac{1}{\tan(\theta)}\to+\infty\quad\textnormal{as}\quad\theta\to0^+
\\ &\frac{1}{\sin(\theta)}\to+\infty\quad\textnormal{as}\quad\theta\to\pi^- &\textnormal{and}\qquad\quad
&\frac{1}{\tan(\theta)}\to-\infty\quad\textnormal{as}\quad\theta\to\pi^-.
\end{align*}
It follows that if $1-r^2<\beta<1+r$ then (see Lemma~\ref{lem3} for details)
\begin{align*}
&\frac{1}{\tan(\theta)}\frac{r^{2}-(\beta-1)}{r^{2}+\beta-1}+\frac{(2-\beta)r}{r^{2}+\beta-1}\frac{1}{\sin(\theta)}\to+\infty\quad\textnormal{as}\quad\theta\to0^+\\
&\frac{1}{\tan(\theta)}\frac{r^{2}-(\beta-1)}{r^{2}+\beta-1}+\frac{(2-\beta)r}{r^{2}+\beta-1}\frac{1}{\sin(\theta)}\to+\infty\quad\textnormal{as}\quad\theta\to\pi^-.
\end{align*}
As $-\cot(n\theta)$ has $n$ branches in the interval $[0,\pi]$ and since
\begin{align*}
&-\cot(n\theta)\to-\infty\quad\textnormal{as}\quad\theta\to0^+\\
&-\cot(n\theta)\to+\infty\quad\textnormal{as}\quad\theta\to\pi^-,
\end{align*}
we conclude that Equation~\eqref{eq:theta_equation_1-1} has $n-1$ solutions $\theta_k\in(0,\pi)$ (if $n$ is large enough, see Lemma~\ref{lem3}). The corresponding eigenvalues $\lambda_k$ of $\tilde{X}$ approach $0$ exponentially fast, so that for the largest eigenvalue of $\tilde{X}$ we have 
$\lambda^*\to{\rm Tr}{(\tilde{X})}$. On the other hand direct consideration of the trace shows that at fixed $\beta$ and $r$ we have
\[
{\rm Tr}(\widetilde{X})\rightarrow\frac{\beta}{1-r^{2}}\quad\textnormal{as}\quad n\rightarrow\infty.
\]
If $\beta>1+r$ we have only $n-2$ real solutions $\theta_k$ to Equation~\eqref{eq:theta_equation_1-1}, see Lemma~\ref{lem3}. We consider Equations~\eqref{equ:2221},~\eqref{equ:2221b} and Equations~\eqref{equ:2222},~\eqref{equ:2222b}.  As $n$ gets large $\frac{\sinh{((n+1)\theta)}}{\sinh{(n\theta)}}$ quickly converges to $e^\theta$ with $\theta\in[0,\infty)$. By the intermediate value theorem \eqref{equ:2221b} has a solution for $\beta>1+r$ and sufficiently large $n$. Any $\lambda^2$ as in Equation~\eqref{equ:2221} decays exponentially with $n$. Hence, in case $\beta>1+r$ we obtain $n-1$ eigenvalues that decay exponentially with $n$, as before. In the asymptotic limit of large $n$ we find that $e^{(\theta^*)}=1/r$ becomes a solution of $\eqref{equ:2222b}$ and $\lambda^*$ in $\eqref{equ:2222}$ does \emph{not} decay exponentially with $n$, which identifies the latter as the largest in magnitude solution for large enough $n$. Hence, we have that $\Norm{X}{}\to \beta/(1-r^2)$ as $n\to\infty$.

\emph{iii) (second part)} We have already seen that $\norm{X_{r,\beta}}{}$ grows monotonically with $n$ if $\beta>1$. Hence, we consider $1-r^2<\beta<1$. As outlined in \emph{v)} there are for large enough\footnote{When there are $n$ solutions to~\eqref{eq:theta_equation_1-1}, monotonicity of $\norm{X_{r,\beta}}{}$ can be readily be seen in~\eqref{equ:222} and~\eqref{eq:theta_equation_1-1}.} $n$, see Lemma~\ref{lem3}, $n-1$ solutions $\theta_k$ to Equation~\eqref{eq:theta_equation_1-1}
and the corresponding eigenvalues, see~\eqref{equ:222}, can be bounded as
\begin{align}
r^{2(n-1)}\left(1-\frac{\beta}{1+r}\right)^2\leq\abs{\lambda_k}^2\leq r^{2(n-1)}\left(1-\frac{\beta}{1-r}\right)^2\label{eigvl}.
\end{align}
We write $\tilde{X}^{(n)}$ and $\tilde{X}^{(n-1)}$ for the matrix $\tilde{X}$ in $n$ and $n-1$ dimensions. Let $\Norm{\cdot}{FR}$ denote the Frobenius norm. Direct computation shows that
\begin{align*}
\Norm{\tilde{X}^{(n)}}{FR}^2-\Norm{\tilde{X}^{(n-1)}}{FR}^2=nr^{2(n-1)}+(n-1)\beta^2r^{2(n-2)}-(n-1)r^{2(n-2)}.
\end{align*}
On the other hand it is well known that $\norm{\tilde{X}^{(n)}}{FR}^2=\sum_{i=1}^n\abs{\lambda_i^{(n)}}^2$, where the $\lambda_i^{(n)}$ are the eigenvalues of $\tilde{X}^{(n)}$.  Writing $\lambda^{*,(n)}$ for the largest in magnitude eigenvalue of $\tilde{X}^{(n)}$ and 
making use the mentioned properties and the estimates~\eqref{eigvl} we find that
\begin{align*}
&\abs{\lambda^{*,(n)}}^2-\abs{\lambda^{*,(n-1)}}^2\\
&\geq nr^{2(n-1)}+(n-1)\beta^2r^{2(n-2)}-(n-1)r^{2(n-2)}\\
&\ -(n-1)r^{2(n-1)}\left(1-\frac{\beta}{1-r}\right)^2\\
&\ +(n-2)r^{2(n-2)}\left(1-\frac{\beta}{1+r}\right)^2.
\end{align*}
The lower bound is a degree-$2$ polynomial in $\beta$, which is convex if $r\in(0,1/2)$. Hence, the lower bound is an increasing function of $\beta\in[1-r^2,1]$. As $\beta=1-r^2$ implies $\abs{\lambda^{*,(n)}}^2-\abs{\lambda^{*,(n-1)}}^2=0$ this proves monotonicity in $n$ if $r\in(0,1/2)$. The range for $r$ can be extended by using better bounds in \eqref{eigvl}. Here we note that at the endpoints of the interval
for $\beta$ we already have established monotonicity for any $r$. Furthermore the norm is a monotonous, convex functions in $\beta$. Any of the branches of $-\cot(n\theta)$ is convex and the terms
$$\frac{r^{2}-(\beta-1)}{r^{2}+\beta-1}\quad\textnormal{and}\quad\frac{(2-\beta)r}{r^{2}+\beta-1}$$
in~\eqref{eq:theta_equation_1-1} are decreasing in $\beta\in[1-r^2,1]$.
It follows that the solutions $\theta_k^{(n)}$ of~\eqref{eq:theta_equation_1-1} satisfy
$$\theta_k^{(n)}(\beta_0)-\theta_k^{(n)}(\beta_1)\leq\theta_k^{(n-1)}(\beta_0)-\theta_k^{(n-1)}(\beta_1)$$
for $\beta_0\leq\beta_1$.
In conclusion the quantity
$$nr^{2(n-1)}+(n-1)\beta^2r^{2(n-2)}-(n-1)r^{2(n-2)}+\sum_{k=1}^{n-2}\abs{\lambda_k^{(n-1)}}^2
-\sum_{k=1}^{n-1}\abs{\lambda_k^{(n)}}^2
$$
is increasing in $\beta$.

\emph{vi)} is a direct consequence of \emph{iii)} and \emph{v)}.
\end{proof} 

\begin{proof}[Proof of Lemma~\ref{lem3}]
For fixed $\beta\in[0,\,2]$, $r\in(0,\,1]$ we for now focus our
attention on real values of $\theta$ and write shortly
\begin{align*}
\varphi_{r,\,\beta}(\theta)=\frac{r^{2}-(\beta-1)}{r^{2}+(\beta-1)}\cot(\theta)+\frac{(2-\beta)r}{\beta-1+r^{2}}\frac{1}{\sin(\theta)}
\end{align*}
such that Equation \eqref{eq:rrrr} becomes 
\[
\varphi_{r,\,\beta}(\theta)=-\cot(n\theta),\qquad\theta\in[-\pi,\,\pi).
\]
We start with some elementary observations. 
\begin{enumerate}
\item We restrict to the interval $[0,\,\pi)$ as all functions are symmetric
with respect to the point $(\pi,0)$. 
\item $\cot(n\theta)$ has poles in $[0,\,\pi)$ at $\theta_{k}=\frac{k}{n}\pi$,
$k=1,...,n-1$. 
\item The functions $\frac{1}{\sin(\theta)}$ and $\cot(\theta)$ have poles
at $0$ and $\pi$, $\cot(\theta)\sim\frac{1}{\sin(\theta)}$ as $\theta$
tends to $0^{+}$ and $\cot(\theta)\sim-\frac{1}{\sin(\theta)}$ as
$\theta$ tends to $\pi^{-}$. 
\item Looking at the polynomials: 
\[
P_{\beta}(r)=r^{2}-(2-\beta)r-(\beta-1)=(r-1)(r-(1-\beta)),
\]
and 
\[
Q_{\beta}(r)=r^{2}+(2-\beta)r-(\beta-1)=(r+1)(r-(\beta-1)),
\]
we have that $P_{\beta}(r)<0$ for $r\in(1-\beta,\,1)$ and $P_{\beta}(r)\geq0$
elsewhere, while $Q_{\beta}(r)<0$ for $r\in(-1,\,\beta-1)$ and $Q_{\beta}(r)\geq0$
elsewhere. 
\item Taking (c) and (d) into account, we have 
\[
\varphi_{r,\,\beta}(\theta)\sim_{_{\theta\rightarrow0^{+}}}\frac{r^{2}+(2-\beta)r-(\beta-1)}{\beta-1+r^{2}}\frac{1}{\sin(\theta)}=\frac{Q_{\beta}(r)}{\beta-1+r^{2}}\frac{1}{\sin(\theta)},
\]
while 
\[
\varphi_{r,\,\beta}(\theta)\sim_{_{\theta\rightarrow\pi^{-}}}-\frac{r^{2}-(2-\beta)r-(\beta-1)}{\beta-1+r^{2}}\frac{1}{\sin(\theta)}=\frac{-P_{\beta}(r)}{\beta-1+r^{2}}\frac{1}{\sin(\theta)}.
\]

\end{enumerate}
We now prove the points in the lemma. 
\begin{enumerate}

\item \textbf{The case $r=1.$} In this case, the structure of \eqref{eq:rrrr}
is discussed in \cite[Section B]{OS}. We now suppose that $r\in(0,\,1)$. 
\item \textbf{The case $r=\Abs{\beta-1}\notin\{0,\,2\}$.}

\begin{enumerate}
\item \textbf{The subcase $\beta<1$. } We have that 
\[
\varphi_{r,\,\beta}(\theta)={\displaystyle \frac{(-2+\beta)\,(\mathrm{cos}(\theta)+1)}{\mathrm{sin}(\theta)\,\beta}},
\]
and as a consequence, $\varphi_{r,\,\beta}<0$ on $(0,\,\pi)$, $\lim_{\theta\rightarrow0^{+}}\varphi_{r,\,\beta}(\theta)=-\infty$,
while $\lim_{_{\theta\rightarrow\pi^{-}}}\varphi_{r,\,\beta}(\theta)=0$.
Thus, the graph of $\varphi_{r,\,\beta}$ intersects $n-1$ times
with the graph of $\theta\mapsto-\cot(n\theta)$ on $[\frac{\pi}{2n},\,\pi)$.
Moreover, 
\[
\frac{\varphi_{r,\,\beta}(\theta)}{-\cot(n\theta)}=\frac{(2-\beta)\sin(\theta)\sin(n\theta)}{\beta\cos(n\theta)(1-\cos(\theta))},
\]
and in particular, for any $\theta\in(0,\,\frac{\pi}{2n})$, $\frac{\varphi_{r,\,\beta}(\theta)}{-\cot(n\theta)}>1$,
which can be seen by computing derivatives with respect to $\theta$:
$\theta\mapsto\frac{\varphi_{r,\,\beta}(\theta)}{-\cot(n\theta)}$
is increasing on $(0,\,\frac{\pi}{2n})$, with 
\[
\lim_{\theta\rightarrow0^{+}}\frac{\varphi_{r,\,\beta}(\theta)}{-\cot(n\theta)}=\frac{2n(2-\beta)}{\beta}\geq2n>1.
\]
The graph of $\varphi_{r,\,\beta}$ does not intersect the one of
$\theta\mapsto-\cot(n\theta)$ on $[0,\,\frac{\pi}{2n})$. We conclude
that in this case \eqref{eq:rrrr} has $2n-2$ real solutions $\theta_{k}$
in $[-\pi,\,\pi)$. 
\item \textbf{The subcase $\beta>1$. } We have that 
\[
\varphi_{r,\,\beta}(\theta)={\displaystyle \frac{(-2+\beta)\,(\mathrm{cos}(\theta)-1)}{\mathrm{sin}(\theta)\,\beta}},
\]
and as a consequence, $\varphi_{r,\,\beta}$ is increasing on $(0,\,\pi)$,
$\varphi_{r,\,\beta}>0$ on $(0,\,\pi)$, 
\[
\lim_{\theta\rightarrow0^{+}}\varphi_{r,\,\beta}(\theta)=0,
\]
while $\lim_{_{\theta\rightarrow\pi^{-}}}\varphi_{r,\,\beta}(\theta)=+\infty$.
Hence the graph of $\varphi_{r,\,\beta}$ intersects $n-1$ times
the one of $-\cot(n\theta)$ on $(0,\,\frac{(2n-1)\pi}{2n}]$. Moreover,
\[
\frac{\varphi_{r,\,\beta}(\theta)}{-\cot(n\theta)}=\frac{(2-\beta)\sin(\theta)\sin(n\theta)}{\beta\cos(n\theta)(1+\cos(\theta))},
\]
and in particular, for any $\theta\in(\frac{(2n-1)\pi}{2n},\,\pi)$,
$\frac{\varphi_{r,\,\beta}(\theta)}{-\cot(n\theta)}>\frac{2n(2-\beta)}{\beta}$
using the fact (as above) that $\theta\mapsto\frac{\varphi_{r,\,\beta}(\theta)}{-\cot(n\theta)}$
is this time decreasing on $(\frac{(2n-1)\pi}{2n},\,\pi)$, with 
\[
\lim_{\theta\rightarrow\pi^{-}}\frac{\varphi_{r,\,\beta}(\theta)}{-\cot(n\theta)}=\frac{2n(2-\beta)}{\beta}.
\]
Adding the positivity and the continuity of $\theta\mapsto\frac{\varphi_{r,\,\beta}(\theta)}{-\cot(n\theta)}$
on $(\frac{(2n-1)\pi}{2n},\,\pi)$, and 
\[
\lim_{\theta\rightarrow\frac{(2n-1)\pi}{2n}}\frac{\varphi_{r,\,\beta}(\theta)}{-\cot(n\theta)}=+\infty,
\]
the intermediate value theorem guarantees that the graph of $\varphi_{r,\,\beta}$
does not intersect the one of $\theta\mapsto-\cot(n\theta)$ on $(\frac{(2n-1)\pi}{2n},\,\pi)$
if and only if $\frac{2n(2-\beta)}{\beta}>1$ (i.e $n>\frac{\beta}{2(2-\beta)}$).
We conclude that in this case \eqref{eq:rrrr} has $2n-2$ real solutions
$\theta_{k}$ in $[-\pi,\,\pi)$ if and only if $n>\frac{\beta}{2(2-\beta)}$. 
\end{enumerate}

From now on, we suppose that $r\neq1-\beta$, and $r\neq\beta-1$.
Let us first notice that in case $\beta-1+r^{2}\neq0$, we have 
\[
\lim_{\theta\rightarrow0^{+}}\frac{\varphi_{r,\,\beta}(\theta)}{-\cot(n\theta)}=\frac{n(1+r)(\beta-1-r)}{\beta-1+r^{2}},
\]
and 
\[
\lim_{\theta\rightarrow\pi^{-}}\frac{\varphi_{r,\,\beta}(\theta)}{-\cot(n\theta)}=\frac{n(1-r)(r+\beta-1)}{\beta-1+r^{2}}.
\]
Below, we repeat the reasonings of (2) for the remaining cases (3),
(4) and (5).

\item \textbf{The case $\beta>1$, and $r\notin\{1,\,\beta-1\}$. }With
(e), $P_{\beta}(r)<0$, for any $r\in[0,\,1)$. In particular, for
any $r\in[0,\,1)$, 
\[
\lim_{_{\theta\rightarrow\pi^{-}}}\varphi_{r,\,\beta}(\theta)=+\infty,
\]
while 
\[
\lim_{\theta\rightarrow0^{+}}\varphi_{r,\,\beta}(\theta)=\left\{ \begin{array}{ll}
-\infty & \mbox{if }r\in[0,\,\beta-1)\\
+\infty & \mbox{\mbox{if }\ensuremath{r\in}\:(\ensuremath{\beta}-1,\,1)}
\end{array}\right.,
\]
because of the property of $Q_{\beta}(r)$ in (d).

\begin{enumerate}
\item \textbf{The subcase $1<\beta<1+r$}. In this case, $\varphi_{r,\,\beta}>0$
on $(0,\,\pi)$ and 
\[
\lim_{\theta\rightarrow0^{+}}\varphi_{r,\,\beta}(\theta)=\lim_{_{\theta\rightarrow\pi^{-}}}\varphi_{r,\,\beta}(\theta)=+\infty.
\]
Moreover, $\varphi_{r,\,\beta}$ has a global minimum at some point
$\theta_{0}\in(0,\,\pi)$ such that $\varphi_{r,\,\beta}$ is decreasing
on $(0,\,\theta_{0}]$ and increasing on $[\theta_{0},\,\pi)$. Thus,
the curve of $\varphi_{r,\,\beta}$ intersects the one of $\theta\mapsto-\cot(n\theta)$
exactly $n-1$ times on $(0,\,\frac{(2n-1)\pi}{2n}]$. On the interval
$(\frac{(2n-1)\pi}{2n},\,\pi$), the curve of $\varphi_{r,\,\beta}$
does not intersect the one of $\theta\mapsto-\cot(n\theta)$ if and
only if $\lim_{\theta\rightarrow\pi^{-}}\frac{\varphi_{r,\,\beta}(\theta)}{-\cot(n\theta)}>1$,
(else, the number of real intersections on $[0,\,\pi)$ is $n$).
Indeed as before, $\theta\mapsto\frac{\varphi_{r,\,\beta}(\theta)}{-\cot(n\theta)}$
is (positive, continuous and) decreasing on $(\frac{(2n-1)\pi}{2n},\,\pi)$.
We conclude that in this case \eqref{eq:rrrr} has $2n-2$ real solutions
$\theta_{k}$ in $[-\pi,\,\pi)$ if and only if $n>\frac{\beta-1+r^{2}}{(1-r)(r+\beta-1)}$. 
\item \textbf{The subcase $\beta>1+r.$ }In this case, $\varphi_{r,\,\beta}$
is increasing on $(0,\,\pi)$, and $\lim_{\theta\rightarrow0^{+}}\varphi_{r,\,\beta}(\theta)=-\infty$,
while $\lim_{_{\theta\rightarrow\pi^{-}}}\varphi_{r,\,\beta}(\theta)=+\infty$.
Thus, the curve of $\varphi_{r,\,\beta}$ intersects the one of $\theta\mapsto-\cot(n\theta)$
exactly $n-2$ times on $[\frac{\pi}{2n},\,\frac{(2n-1)\pi}{2n}]$.
Moreover, on $(0,\,\frac{\pi}{2n})\cup(\frac{(2n-1)\pi}{2n},\,\pi)$,
$\theta\mapsto\frac{\varphi_{r,\,\beta}(\theta)}{-\cot(n\theta)}$
is positive and continuous; it is increasing on $(0,\,\frac{\pi}{2n})$
and decreasing on $(\frac{(2n-1)\pi}{2n},\,\pi)$. Thus, the curve
of $\varphi_{r,\,\beta}$ does not intersect the one of $\theta\mapsto-\cot(n\theta)$
on $(0,\,\frac{\pi}{2n})$ (resp. on $(\frac{(2n-1)\pi}{2n},\,\pi))$
if and only if $\lim_{\theta\rightarrow0^{+}}\frac{\varphi_{r,\,\beta}(\theta)}{-\cot(n\theta)}>1$
(resp. $\lim_{\theta\rightarrow\pi^{-}}\frac{\varphi_{r,\,\beta}(\theta)}{-\cot(n\theta)}>1$).
We conclude that in this case \eqref{eq:rrrr} has $2n-4$ real solutions
$\theta_{k}$ in $[-\pi,\,\pi)$ if and only if $n>\frac{\beta-1+r^{2}}{(1-r)(r+\beta-1)}$, since  $\frac{\beta-1+r^{2}}{(1+r)(\beta-1-r)}<\frac{\beta-1+r^{2}}{(1-r)(r+\beta-1)}$.
\end{enumerate}
\item \textbf{The case $\beta<1$ and $r\notin\{1,\,1-\beta\}$. }With (e),
$Q_{\beta}(r)\geq0$, for any $r\in[0,\,1)$. In particular, for any
$r\in[0,\,1)$, 
\[
\lim_{_{\theta\rightarrow0^{+}}}\varphi_{r,\,\beta}(\theta)=\left\{ \begin{array}{ll}
+\infty & \mbox{if }\beta-1+r^{2}>0\\
-\infty & \mbox{\mbox{if }}\beta-1+r^{2}<0
\end{array}\right.,
\]
while 
\[
\lim_{\theta\rightarrow\pi^{-}}\varphi_{r,\,\beta}(\theta)=\left\{ \begin{array}{ccc}
+\infty & \mbox{if }\:\beta-1+r^{2}>0\\
+\infty & \mbox{if }\:\beta-1+r^{2}<0 & \:{\rm and}\: r\in[0,\,1-\beta)\\
-\infty & \mbox{if }\:\beta-1+r^{2}<0 & \:{\rm and}\:\ensuremath{r\in}(1-\ensuremath{\beta},\,1)
\end{array},\right.
\]
because of the property of $P_{\beta}(r)$ in (d).

\begin{enumerate}
\item \textbf{The subcase $1-r^{2}<\beta<1.$ }In this case, $\varphi_{r,\,\beta}>0$
on $(0,\,\pi)$, and $\lim_{\theta\rightarrow0^{+}}\varphi_{r,\,\beta}(\theta)=\lim_{_{\theta\rightarrow\pi^{-}}}\varphi_{r,\,\beta}(\theta)=+\infty$.
Moreover, $\varphi_{r,\,\beta}$ has a global minimum at some point
$\theta_{0}\in(0,\,\pi)$ such that $\varphi_{r,\,\beta}$ is decreasing
on $(0,\,\theta_{0}]$ and increasing on $[\theta_{0},\,\pi)$. We
conclude as in (3)-(a) that in this case, \eqref{eq:rrrr} has $2n-2$
real solutions $\theta_{k}$ in $[-\pi,\,\pi)$ if and only if $n>\frac{\beta-1+r^{2}}{(1-r)(r+\beta-1)}$. 
\item \textbf{The subcase $0<\beta<1-r^{2}.$ }

\begin{enumerate}
\item \textit{If} $0<\beta<1-r,$ $\varphi_{r,\,\beta}$ is increasing on
$(0,\,\pi)$, $\lim_{_{\theta\rightarrow0^{+}}}\varphi_{r,\,\beta}(\theta)=-\infty$,
and $\lim_{\theta\rightarrow\pi^{-}}\varphi_{r,\,\beta}(\theta)=+\infty$.
We conclude by a similar reasoning as the one in (3)-(b) that in this
case also \eqref{eq:rrrr} has $2n-4$ real solutions $\theta_{k}$
in $[-\pi,\,\pi)$ if and only if $n>\frac{\beta-1+r^{2}}{(1-r)(r+\beta-1)}$, since  $\frac{\beta-1+r^{2}}{(1+r)(\beta-1-r)}<\frac{\beta-1+r^{2}}{(1-r)(r+\beta-1)}$.

\item \textit{If $1-r<\beta<1-r^{2},$} $\varphi_{r,\,\beta}(\theta)<0,\:\forall\theta\in(0,\,\pi)$,
and $\lim_{\theta\rightarrow0^{+}}\varphi_{r,\,\beta}(\theta)=\lim_{_{\theta\rightarrow\pi^{-}}}\varphi_{r,\,\beta}(\theta)=-\infty$.
Moreover, $\varphi_{r,\,\beta}$ has a global maximum $\theta_{0}\in(0,\,\pi)$
such that $\varphi_{r,\,\beta}$ is increasing on $(0,\,\theta_{0}]$
and decreasing on $[\theta_{0},\,\pi)$. Thus, the curve of $\varphi_{r,\,\beta}$
intersects the one of $\theta\mapsto-\cot(n\theta)$ exactly $n-1$
times on $[\frac{\pi}{2n},\,\pi)$. in the interval $(0,\,\frac{\pi}{2n}$),
the curve of $\varphi_{r,\,\beta}$ does not intersect the one of
$\theta\mapsto-\cot(n\theta)$ if and only if $\lim_{\theta\rightarrow0^{+}}\frac{\varphi_{r,\,\beta}(\theta)}{-\cot(n\theta)}>1$.
The latter condition is always satisfied since $\frac{n(1+r)(\beta-1-r)}{\beta-1+r^{2}}=\frac{n(1+r)(1-\beta+r)}{1-\beta-r^{2}}>n$.
We conclude that in this case, \eqref{eq:rrrr} has $2n-2$ real solutions
$\theta_{k}$ on $[-\pi,\,\pi)$. 
\end{enumerate}
\end{enumerate}
\item \textbf{The case $\beta=1$}. In this case, 
\[
\varphi_{r,\,\beta}(\theta)={\displaystyle \frac{1+r\mathrm{cos}(\theta)}{r\mathrm{sin}(\theta)}},
\]
and thus $\varphi_{r,\,\beta}(\theta)>0,\:\forall\theta\in(0,\,\pi)$,
and $\lim_{\theta\rightarrow0^{+}}\varphi_{r,\,\beta}(\theta)=\lim_{_{\theta\rightarrow\pi^{-}}}\varphi_{r,\,\beta}(\theta)=+\infty$.
Moreover, $\varphi_{r,\,\beta}$ has a global minimum at some point
$\theta_{0}\in(0,\,\pi)$ such that $\varphi_{r,\,\beta}$ is decreasing
on $(0,\,\theta_{0}]$ and increasing on $[\theta_{0},\,\pi)$. In
particular, the graph of $\varphi_{r,\,\beta}$ intersects the one
of $\theta\mapsto-\cot(n\theta)$ exactly $n-1$ times on $(0,\,\frac{(2n-1)\pi}{2n}]$.
On the interval $(\frac{(2n-1)\pi}{2n},\,\pi$), the function $\theta\mapsto\frac{\varphi_{r,\,\beta}(\theta)}{-\cot(n\theta)}$
is decreasing and thus the curve of $\varphi_{r,\,\beta}$ does not
intersect the one of $\theta\mapsto-\cot(n\theta)$ if and only if
$\lim_{\theta\rightarrow\pi^{-}}\frac{\varphi_{r,\,\beta}(\theta)}{-\cot(n\theta)}>1$.
We conclude that in this case, \eqref{eq:rrrr} has $2n-2$ real solutions
$\theta_{k}$ on $[-\pi,\,\pi)$ if and only if $n>\frac{r}{1-r}$
(else \eqref{eq:rrrr} has $2n$ real solutions in $[-\pi,\,\pi)$). 
\end{enumerate}
\end{proof}

\section{Appendix \label{sec:appendix-1}}

The columns $C_{i},\: i=1,\dots,n$ of the $n\times n$ matrix $\widetilde{X}-\lambda^{2}$ are:

\[
C_{1}=\left[\begin{array}{c}
-\lambda^{2}+\beta^{2}(1+\, r^{2}+\ldots+r^{2.(n-2)})+r^{2.(n-1)}\\
\beta^{2}\, r(1+r^{2}+r^{2.(n-3)})+\beta\, r^{2.(n-1)-1}\\
\beta^{2}\, r^{2}(1+r^{2}+r^{2.(n-4)})+\beta\, r^{2.(n-1)-2}\\
\vdots\\
\beta^{2}\, r^{n-3}(1+r^{2})+\beta\, r^{n+1}\\
\beta^{2}\, r^{n-2}+\beta\, r^{n}\\
\beta\, r^{n-1}
\end{array}\right],
\]

\[
C_{2}=\left[\begin{array}{c}
\beta^{2}\, r(1+r^{2}+r^{2.(n-3)})+\beta\, r^{2.(n-1)-1}\\
-\lambda^{2}+\beta^{2}r^{2}(1+\, r^{2}+\ldots+r^{2.(n-3)})+r^{2.(n-1)}\\
\beta^{2}\, r^{3}(1+r^{2}+r^{2.(n-4)})+\beta\, r^{2.(n-1)-1}\\
\vdots\\
\beta^{2}\, r^{n-2}(1+r^{2})+\beta\, r^{n+2}\\
\beta^{2}\, r^{n-1}+\beta\, r^{n+1}\\
\beta\, r^{n}
\end{array}\right],
\]

\[
C_{3}=\left[\begin{array}{c}
\beta^{2}\, r^{2}(1+r^{2}+r^{2.(n-4)})+\beta\, r^{2.(n-1)-2}\\
\beta^{2}\, r^{3}(1+r^{2}+r^{2.(n-4)})+\beta\, r^{2.(n-1)-1}\\
-\lambda^{2}+\beta^{2}r^{4}(1+\, r^{2}+\ldots+r^{2.(n-4)})+r^{2.(n-1)}\\
\beta^{2}\, r^{5}(1+\, r^{2}+\ldots+r^{2.(n-5)})+\beta\, r^{2.(n-1)-1}\\
\vdots\\
\beta^{2}\, r^{n-1}(1+r^{2})+\beta\, r^{n+3}\\
\beta^{2}\, r^{n}+\beta\, r^{n+2}\\
\beta\, r^{n+1}
\end{array}\right],
\]

\[
C_{4}=\left[\begin{array}{c}
\beta^{2}\, r^{3}(1+r^{2}+r^{2.(n-5)})+\beta\, r^{2.(n-1)-3}\\
\beta^{2}\, r^{4}(1+r^{2}+r^{2.(n-5)})+\beta\, r^{2.(n-1)-2}\\
\beta^{2}\, r^{5}(1+r^{2}+r^{2.(n-5)})+\beta\, r^{2.(n-1)-1}\\
-\lambda^{2}+\beta^{2}r^{6}(1+\, r^{2}+\ldots+r^{2.(n-5)})+r^{2.(n-1)}\\
\beta^{2}\, r^{7}(1+r^{2}+r^{2.(n-6)})+\beta\, r^{2.(n-1)-1}\\
\vdots\\
\beta^{2}\, r^{n}(1+r^{2})+\beta\, r^{n+4}\\
\beta^{2}\, r^{n+1}+\beta\, r^{n+3}\\
\beta\, r^{n+2}
\end{array}\right],\dots
\]

\[
\dots C_{n-3}=\left[\begin{array}{c}
\beta^{2}\, r^{n-4}(1+r^{2}+r^{4})+\beta\, r^{n+2}\\
\beta^{2}\, r^{n-3}(1+r^{2}+r^{4})+\beta\, r^{n+3}\\
\beta^{2}\, r^{n-2}(1+r^{2}+r^{4})+\beta\, r^{n+4}\\
\vdots\\
\beta^{2}\, r^{2n-10}(1+r^{2}+r^{4})+r^{2n-4}\\
\beta^{2}\, r^{2n-9}(1+r^{2}+r^{4})+r^{2n-3}\\
-\lambda^{2}+\beta^{2}\, r^{2n-8}(1+r^{2}+r^{4})+r^{2n-2}\\
\beta^{2}\, r^{2n-7}(1+r^{2})+\beta\, r^{2n-3}\\
\beta^{2}\, r^{2n-6}+\beta\, r^{2n-4}\\
\beta\, r^{2n-5}
\end{array}\right],
\]
\[
C_{n-2}=\left[\begin{array}{c}
\beta^{2}\, r^{n-3}(1+r^{2})+\beta\, r^{n+1}\\
\beta^{2}\, r^{n-2}(1+r^{2})+\beta\, r^{n+2}\\
\vdots\\
\vdots\\
\beta^{2}\, r^{2n-9}(1+r^{2})+\beta\, r^{2n-5}\\
\beta^{2}\, r^{2n-8}(1+r^{2})+\beta\, r^{2n-4}\\
\beta^{2}\, r^{2n-7}(1+r^{2})+\beta\, r^{2n-3}\\
-\lambda^{2}+\beta^{2}\, r^{2n-6}(1+r^{2})+r^{2n-2}\\
\beta^{2}\, r^{2n-5}+\beta\, r^{2n-3}\\
\beta\, r^{2n-4}
\end{array}\right],
\]
\[
C_{n-1}=\left[\begin{array}{c}
\beta^{2}r^{n-2}+\beta r^{n}\\
\beta^{2}r^{n-1}+\beta r^{n+1}\\
\beta^{2}r^{n}+\beta r^{n+2}\\
\vdots\\
\vdots\\
\vdots\\
\vdots\\
\beta^{2}\, r^{2n-5}+\beta\, r^{2n-3}\\
-\lambda^{2}+\beta^{2}\, r^{2n-4}+r^{2n-2}\\
\beta\, r^{2n-3}
\end{array}\right],
\]
and

\[
C_{n}=\left[\begin{array}{c}
\beta r^{n-1}\,\\
\beta r^{n}\,\\
\beta r^{n+1}\,\\
\vdots\\
\vdots\\
\vdots\\
\vdots\\
\beta\, r^{2n-4}\\
\beta\, r^{2n-3}\\
-\lambda^{2}+r^{2n-2}
\end{array}\right].
\]

$\,$

\end{document}